\let\BFseries\bfseries\def\bfseries{\BFseries\mathversion{bold}} 
\def\E{{\mathbb E}}
\def\N{{\mathbb N}}
\def\P{{\mathbb P}}
\def\R{{\mathbb R}}
\def\Z{{\mathbb Z}}
\def\dd{\delta}
\def\eps{\varepsilon}
\newcommand{\pr}[1]{\P\left(#1\right)}
\newcommand{\pin}{\mathfrak p}
\newcommand{\ssup}[1] {{\scriptscriptstyle{({#1})}}}
\newcommand{\cL}{\mathcal L}
\newcommand{\cF}{\mathcal F}
\newcommand{\sfrac}[2]{\mbox{$\frac{#1}{#2}$}}
\newcommand{\inftwo}[2]{\inf_{\substack{#1 \\ #2}}} 
\theoremstyle{plain}
\newtheorem{lem}{Lemma}
\newtheorem{thm}{Theorem}
\newtheorem{prop}{Proposition}
\newtheorem{rem}{Remark}
\newcommand{\ind}{1\hspace{-0.098cm}\mathrm{l}}
\begin{document}
\title{Persistence probabilities for an integrated random walk bridge}
\author{Frank Aurzada\footnote{Technische Universit\"at Berlin, Institut f\"ur Mathematik, Sekr. MA 7-5, Stra\ss e des 17.\ Juni 136, 10623 Berlin, Germany,
aurzada@math.tu-berlin.de}, Steffen Dereich\footnote{Westf\"alische Wilhelms-Universit\"at, Institut f\"ur Mathematische Statistik, Einsteinstr.~62, 48149 M\"unster, Germany, steffen.dereich@wwu.de}, and Mikhail Lifshits\footnote{St.\ Petersburg State University, 198504 Stary Peterhof, Dept.\ of Mathematics and Mechanics, Bibliotechnaya pl., 2,
Russia, lifts@mail.rcom.ru}}
\date{\today}
\maketitle

\begin{abstract}
 We prove that an integrated simple random walk, where random walk and integrated random walk are conditioned to return to zero, has asymptotic probability $n^{-1/2}$ to stay positive. This question is motivated by so-called random polymer models and proves a conjecture by Caravenna and Deuschel.
\end{abstract}

\medskip
\noindent {\bf Keywords:} entropic repulsion; integrated random walk; persistence probability; random polymer model

\medskip
\noindent  {\bf 2010 Mathematics Subject Classification:} 60G50; 60F99

\section{Introduction and main result}
\subsection{Introduction}
This paper considers the so-called persistence probability, i.e.
$$
 \pr{ A_1\geq 0, \ldots, A_{n}\geq 0}\approx n^{-\theta},\qquad n\to\infty,
$$
where $A$ is some stochastic process and the number $\theta$ is called persistence exponent. The problem is also called one-sided exit problem or survival probability problem. Problems of this type have experienced quite some recent attention, see e.g.\ \cite{vlad1,aurzadadereich1,dembogao,aurzadafbm,aurzadabaumgarten,vlad2,baumgarten2,baumgarten3} and the recent survey paper \cite{assurvey}.

These probabilities have a couple of applications to problems in theoretical physics as well as to other questions in probability. We refer to the mentioned survey \cite{assurvey} and to a survey article on the related physics literature \cite{majumdar} for details.

 The particular problem that we treat here is motivated by a connection to so-called random polymer models, see Section~1.5 in \cite{caravennadeuschel2009}. Here, $A$ will be an integrated random walk, where the pair of random walk and integrated random walk is conditioned to return to the origin. This is supposed to model a polymer chain with Laplace interaction and zero boundary conditions.

Let us be more precise and introduce the relevant notation. Let $(X_i)_{i\in\N}$ be a sequence of independent symmetric Bernoulli random variables. We consider the simple 
random walk $S_n:=\sum_{i=1}^n X_i$ and the respective integrated random walk $A_n:=\sum_{i=1}^n S_i$ for $n\in\N_0:=\{0,1,\dots\}$. Note that the paired process $(S_n,A_n)_{n\in\N_0}$ is Markovian and that it can return to $(0,0)$ only at the times $4 n$, $n\geq 1$. Our main theorem is as follows.

\begin{thm} \label{thm:main} When $n\to\infty$ we have
\begin{equation} \label{eqn:mainht2}
   \pr{ A_1\geq 0, \ldots, A_{4n}\geq 0 \left| A_{4n}=S_{4n}=0 \right. } \approx n^{-1/2}.
\end{equation}
\end{thm}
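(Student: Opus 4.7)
The plan is to analyse numerator and denominator separately in the identity
\[
  \P(A_1,\ldots,A_{4n}\geq 0 \mid S_{4n}=A_{4n}=0)
  \;=\; \frac{\P(A_1,\ldots,A_{4n}\geq 0,\,S_{4n}=A_{4n}=0)}{\P(S_{4n}=A_{4n}=0)}.
\]
The two-dimensional Markov chain $(S_k,A_k)$ on $\Z^2$ has natural rescaling $(\sqrt n,n^{3/2})$ and a non-degenerate Gaussian limit, so a local central limit theorem (with the appropriate period-$4$ correction, which is why the theorem only quotes times $4n$) yields $\P(S_{4n}=A_{4n}=0) \asymp n^{-2}$. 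It therefore suffices to show that the numerator is of order $n^{-5/2}$.

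Applying the Markov property at the midpoint $2n$ rewrites the numerator as $\sum_{(s,a)} F_n(s,a)\,G_n(s,a)$, where $F_n(s,a):=\P(A_1,\ldots,A_{2n}\geq 0,\,S_{2n}=s,\,A_{2n}=a)$ and $G_n(s,a)$ is the probability that the walk starting from $(s,a)$ at time $2n$ reaches $(0,0)$ at time $4n$ while keeping $A$ non-negative throughout. Writing the second-half constraint in the form $a+\ell s+\widetilde A_\ell\geq 0$ (with $\widetilde A$ the shifted integrated walk) and using a time-reversal identifies $G_n$ with a quantity of essentially the same form as $F_n$, with the roles of initial and terminal states exchanged. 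The problem then reduces to establishing the local persistence estimate
\[
  F_n(s,a) \;\asymp\; n^{-9/4}\, q\!\bigl(\tfrac{s}{\sqrt n},\,\tfrac{a}{n^{3/2}}\bigr),
\]
uniformly for $(s,a)$ on the correct sublattice with rescaled coordinates in a compact subset of the open positive quadrant; here the factor $n^{-1/4}$ is the known persistence exponent of the unconditioned integrated random walk and the extra factor $n^{-2}$ is the conditional local CLT density for the endpoint. Substituting into the Markov decomposition and approximating the sum by an integral yields the targeted $n^{-5/2}$.

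The upper bound on $F_n$ (and hence on the conditional probability) follows by combining the known global estimate $\P(A_1,\ldots,A_{2n}\geq 0)\leq Cn^{-1/4}$ with a conditional local CLT for $(S_{2n},A_{2n})$. The lower bound on $F_n$ is the main obstacle, and is precisely what removes the spurious logarithmic factor in the earlier Caravenna--Deuschel lower bound: one needs a pointwise, not merely global, matching lower bound for the conditioned walk. The natural approach is a three-stage construction. In a short initial window of fixed length $L$, drive the walk into a favourable interior state where $S$ and $A$ are positive and of order $\sqrt L$ and $L^{3/2}$ respectively. Over the long bulk interval, use a strong approximation to couple the integrated random walk conditioned on $A\geq 0$ with a suitably conditioned Kolmogorov diffusion, for which sharp endpoint densities are available. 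Finally, on a short terminal window, adjust the walk to hit the prescribed lattice endpoint via a local CLT for the increments. The main technical burden is the quantitative invariance principle for the integrated random walk conditioned on $A\geq 0$: because the pair $(S_k,A_k)$ is strongly correlated, the standard machinery for reflected one-dimensional random walks does not apply directly and has to be adapted using the Markov structure of the pair.
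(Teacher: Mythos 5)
Your upper-bound sketch is essentially sound and amounts to the paper's argument (drop the positivity constraint on one half, use the unconditional local limit theorem to bound the bridge factor by $Cn^{-2}$, and pay $n^{-1/4}$ for persistence on each half). The problem is the lower bound. Your reduction at the midpoint forces you to prove the pointwise two-sided estimate
$F_n(s,a)\asymp n^{-9/4}\,q\bigl(s/\sqrt n,\,a/n^{3/2}\bigr)$
uniformly on compacts of the positive quadrant, i.e.\ a local limit theorem \emph{jointly} with the persistence constraint over the whole half $[0,2n]$. This is a strictly stronger statement than Theorem~\ref{thm:main} itself, and nothing in your proposal proves it: the ``three-stage construction'' (short entry window, strong approximation of the walk conditioned on $A\ge 0$ by a conditioned Kolmogorov diffusion, local adjustment at the end) is a program, not an argument. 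The crucial middle step --- a quantitative coupling of the integrated random walk conditioned on $\{A_j\ge 0\}$ with the conditioned diffusion, accurate enough to transfer endpoint densities at lattice resolution $n^{-1/2}\times n^{-3/2}$ --- is exactly the hard part, you acknowledge that the standard one-dimensional machinery does not apply, and no substitute is supplied. As it stands the lower bound is therefore a genuine gap, with the difficulty relocated rather than resolved.

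The paper's proof shows how to avoid ever needing such a conditioned local estimate. It splits $4n=n+2n+n$ rather than $2n+2n$. For the two outer blocks one only needs \emph{box-level} (not point-level) control: conditioned on $\Omega_n^+$, the endpoint $(S_n,A_n)$ lies in $[an^{1/2},bn^{1/2}]\times[an^{3/2},bn^{3/2}]$ with probability bounded below, which is obtained from the moment bounds of Proposition~\ref{prop:scaling} (path-reversal/monotonicity tricks) together with Lemma~\ref{lem:spositivecon} (an association inequality). For the middle block, both endpoints now sit at macroscopic positive height, so one needs only that the bridge of length $2n$ pinned at such points stays nonnegative with probability bounded below, and hits the prescribed lattice point with probability of order $n^{-2}$: the second factor is the \emph{unconditional} local limit theorem (Proposition~\ref{prop:llt}), and the first follows from the pinned functional CLT (Theorem~\ref{pinned_CLT}), since the limiting pinned process $(B,I)$ with both ends in the open positive quadrant stays positive with positive probability. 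In short, positivity and the local endpoint constraint are decoupled onto different blocks, whereas your decomposition requires them simultaneously on the same block; if you want to keep your route you would have to actually prove the conditioned local persistence asymptotics, which is a substantially harder task than the theorem.
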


This proves the conjecture by Caravenna and Deuschel (see \cite{caravennadeuschel2009}, (1.22) on p.\ 2396).

Let us give a couple of remarks. The unconditioned probability also has been subject to a number of studies: The first is due to Sinai \cite{sinai1991} who showed that, with the notation above,
\begin{equation}
\pr{ A_1\geq 0, \ldots, A_{4n}\geq 0 } \approx n^{-1/4}. 
\end{equation}
This result was subsequently refined by \cite{vlad1,aurzadadereich1,dembogao,vlad2} to the end that
$$
\pr{ A_1\geq 0, \ldots, A_{4n}\geq 0 } \sim c n^{-1/4},
$$
see \cite{vlad2}, Theorem~1, which extends to other types of random walks.

We remark that the result in Theorem~\ref{thm:main} is in contrast to conditioning only on $S_{4n}=0$, where the rate is again
$$
\pr{ A_1\geq 0, \ldots, A_{4n}\geq 0 ~|~S_{4n}=0} \approx n^{-1/4},
$$
see \cite{vlad2}, Proposition~1, where our problem is also mentioned. That is, conditioning on $S_{4n}=0$ does not change the rate from the unconditioned rate.

The remainder of this paper is structured as follows: The next subsection gives an overview of the method of proof. In Section~\ref{sec:llt} we derive a local limit theorem for the (unconditioned) process $(S_n,A_n)$, which we could not locate in the literature. A similar local limit theorem was proved in \cite{caravennadeuschel2009} (see Proposition~2.3 there) for random walks with $X_1$ having a continuous distribution and under an appropriate integrability assumption. In Section~\ref{sec:scaling}, we show some scaling properties of the process $(S_n,A_n)$ under the condition $A_1\geq 0, \ldots, A_{n}\geq 0$. Further, Section~\ref{sec:cltpinned} contains a CLT for the process $(S_n,A_n)$ ``pinned'' at some final value: $(S_n,A_n)=(p_n,q_n)$ with $(n^{-1/2}p_n,n^{-3/2}q_n)\to (p,q)$; we show that the suitably scaled law of this process converges to the law of a pair of Brownian motion and integrated Brownian motion, conditioned to end at $(p,q)$. This result may be of independent interest. Finally,  Section~\ref{sec:proofmainthm} contains the proof of the main result.

\subsection{Overview of the proof and notation}

Throughout we use the following notation
$$
   \Omega_n^+:=\{ A_j\ge 0, 1\le j\le n \}.
$$
We let
\begin{equation} \label{Dn}
 D_n:=\{\ell=(\ell_1,\ell_2)\in \Z^2:\ \ell_1=n\mod 2,\quad \ell_2=\tfrac{n(n+1)}{2}\mod 2 \}
\end{equation}
denote the set of all possible values of $(S_n,A_n)$.

We will use the notation of an adjoint process which is gained via time reversion. Depending on parameters $N\in \N$ and $(s_N,a_N)\in\Z^2$ we define the adjoint process  $(\bar S_n^\ssup{N},\bar A_n^\ssup{N})_{n=0,\dots,N}$ via
$$
\bar S^\ssup{N}_0= s_N \text{ and }  \bar A^\ssup{N}_0= a_N
$$
and the equations
$$
\bar S^{\ssup N}_{n+1}= \bar S^\ssup{N}_n - X_{N-n} \text{ and } \bar A^\ssup {N}_{n+1}= \bar A^\ssup{N}_{n}-\bar S^\ssup{N}_n.
$$
By construction, one has for $n,m\in\{0,\dots,N\}$ 
\begin{equation} \label{eqn:adjointprop}  \{(\bar S^\ssup{N}_{N-n},\bar A^\ssup{N}_{N-n}) =(S_n,A_n)\} = \{(\bar S^\ssup{N}_{N-m},\bar A^\ssup{N}_{N-m}) =(S_m,A_m)\}
\end{equation}
meaning that the time reversed adjoint process and the original process either agree or disagree for all times $n=0,\dots,N$. In particular, the event of accordance is equal to $\{(\bar S^\ssup{N}_{N},\bar A^\ssup{N}_{N})=(0,0)\}$ and $\{(S_N,A_N)=(s_N,a_N)\}$. The process $(\bar S^\ssup{N}_n,\bar A^\ssup{N}_n)$ is Markovian and the definition can be extended in a canonical way to the time index $\N_0$.


The strategy of the proof is as follows: We partition the time frame $4n$ into three periods: in the first $n$ steps, the process is observed under the conditioning. The same is done in the last $n$ steps. Then in the middle $2n$ time steps one has to use (upper bound) or ensure (lower bound) that the two ends meet.

For the upper bound, we observe that, on $\Omega_{4n}^+$ and with $A_{4n}=S_{4n}=0$, one has $\Omega_{n}^+$ and the same condition for the adjoint process on the final $n$ time steps. Conditioning on the first and last $n$ time steps, in the middle piece -- consisting of $2n$ time steps -- one again observes a pair of random walk and integrated random walk, however, starting and terminating at certain values. The probability of this pair starting and ending at certain values is governed by a local limit theorem.

The lower bound is more complex. Here one has to ensure that during the first (and last, respectively) $n$ steps one ends with $(S_n,A_n)$ in a target zone $ [a n^{1/2}, b n^{1/2}]\times [a n^{3/2}, b n^{3/2}]$, where $a, b$ are appropriate positive constants. This can be shown by analyzing the scaling of $S$ and $A$ under the conditioning. To ensure that both ends meet, we prove a CLT for the process $(S_n,A_n)$ which is ``pinned'' at the beginning and at the end (by values that are in the above target zone). This CLT helps us to transfer the question of positivity of the second component to the same question for the limiting process (that is, Brownian motion and its integrated counterpart), where the question of positivity is easily solved.

For this purpose, we will need the notion of Brownian motion $B=(B_t)_{t\geq 0}$ and integrated Brownian motion $I=(I_t)_{t\geq0}$ defined as
$$
I_t := \int_0^t B_s \,{\rm d} s.
$$
Note that the paired process $\Gamma=(\Gamma_t)_{t\geq 0}=(B_t,I_t)_{t\geq0}$  is a centered Gaussian Markov process and $\Gamma_1$ has the two dimensional Lebesgue density
\begin{equation} \label{eqn:defng}
   g(x,y)=
   \frac {\sqrt{3}}{\pi} \ \exp\left\{  - 2 x^2 +6 x y-6 y^2\right\}.
\end{equation}

A main ingredient of our proofs will be the following local limit theorem for the simple random walk and its integrated version, which we could not locate in the literature.

\begin{prop} \label{prop:llt}
\[
   \lim_{n\to\infty} \sup_{\ell\in D_n} \left|\frac{n^2}{4}\P((S_n,A_n)=\ell) 
   - g\left(\ell_1/\sqrt{n},\ell_2/n^{3/2} \right) \right| =0,
\]
where $g$ is defined in $(\ref{eqn:defng})$.
\end{prop}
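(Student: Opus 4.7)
The plan is to apply Fourier inversion on the torus, taking care of the fact that $(S_n,A_n)$ is supported on a proper sublattice $D_n\subset\Z^2$ of index $4$. Writing $A_n=\sum_{j=1}^n(n-j+1)X_j$, the joint characteristic function factors as
\begin{equation*}
\phi_n(t_1,t_2) := \E\bigl[e^{i(t_1 S_n+t_2 A_n)}\bigr] = \prod_{k=1}^n \cos(t_1+k t_2),
\end{equation*}
and the inversion formula on $\Z^2$ reads
\begin{equation*}
\P\bigl((S_n,A_n)=\ell\bigr) = \frac{1}{(2\pi)^2}\int_{[-\pi,\pi]^2} e^{-i(t_1\ell_1+t_2\ell_2)}\phi_n(t)\,dt.
\end{equation*}
The key structural point is that $|\phi_n|$ attains its maximum $1$ on the torus at exactly the four ``peaks'' $t^*\in\{0,\pi\}^2$; at each peak a direct computation gives $\phi_n(t^*+u)=\sigma_n(t^*)\phi_n(u)$ with an $n$-dependent sign $\sigma_n(t^*)\in\{\pm 1\}$, and the parity conditions defining $D_n$ are exactly calibrated so that $\sigma_n(t^*)\,e^{-it^*\cdot\ell}=1$ at all four peaks for every $\ell\in D_n$.

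Around each peak I would substitute $(u,v)=(s_1/\sqrt n,\,s_2/n^{3/2})$ (Jacobian $1/n^2$) and Taylor expand $\log\cos(x)=-x^2/2+O(x^4)$:
\begin{equation*}
\log\phi_n\bigl(s_1/\sqrt n,s_2/n^{3/2}\bigr) = -\tfrac12\sum_{k=1}^n\Bigl(\tfrac{s_1}{\sqrt n}+\tfrac{k s_2}{n^{3/2}}\Bigr)^{\!2} + O(n^{-1}) \ \longrightarrow\ -\tfrac12 s_1^2 - \tfrac12 s_1 s_2 - \tfrac{s_2^2}{6},
\end{equation*}
which equals $\log\E\bigl[e^{i(s_1 B_1+s_2 I_1)}\bigr]$, the characteristic exponent of the limit $(B_1,I_1)$ whose density is $g$. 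A dominated-convergence argument using $|\cos x|\le e^{-c x^2}$ on a fixed neighborhood of $0$ then shows that each peak region contributes $n^{-2}\,g(\ell_1/\sqrt n,\ell_2/n^{3/2})+o(n^{-2})$ uniformly in $\ell$; the four contributions add constructively by the sign calibration above, and multiplying by $n^2/4$ produces the claim up to $o(1)$.

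The main obstacle is the peripheral estimate: one must show that the integral of $|\phi_n|$ over the complement $R_\delta$ of the four $\delta$-balls around the peaks is $o(n^{-2})$ uniformly in $\ell$, for which it suffices to establish the uniform bound $|\phi_n(t)|\le e^{-c(\delta)n}$ on $R_\delta$. Starting from the identity $|\phi_n(t)|^2 = 2^{-n}\prod_{k=1}^n(1+\cos(2t_1+2k t_2))$ and the inequality $\log(1+x)\le x$, one obtains
\begin{equation*}
|\phi_n(t)|^2 \ \le\ 2^{-n}\exp\!\bigl(|\sin(n t_2)|/|\sin t_2|\bigr).
\end{equation*}
For $|t_2|\ge C(\delta)/n$ with $C$ chosen large enough, the $2^{-n}$ factor dominates and yields exponential decay. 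In the complementary regime $|t_2|<C/n$ the constraint $t\in R_\delta$ forces $t_1$ to stay bounded away from $\pi\Z$; since the total drift $n|t_2|$ is then also bounded, all values $t_1+k t_2$ remain uniformly separated from $\pi\Z$, each cosine is at most $\cos(c'\delta)<1$, and the product decays exponentially. The analogous bound near the other three peaks follows from the symmetry $\phi_n(t^*+u)=\pm\phi_n(u)$; patching these two regimes into a single uniform constant $c(\delta)$ is the most delicate part of the argument.
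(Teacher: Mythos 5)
Your overall architecture coincides with the paper's proof of Proposition~\ref{prop:llt} (Fourier inversion, the four peaks $\{0,\pi\}^2$ of $|\phi_n|$, the parity calibration with $D_n$, Gaussian approximation at the peaks, exponential smallness elsewhere), and your inequality $|\phi_n(t)|^2\le 2^{-n}\exp\bigl(|\sin(nt_2)|/|\sin t_2|\bigr)$ is an attractive alternative to part of the paper's Lemma~\ref{l:arithm} in the range $|t_2|\ge C/n$. However, two steps fail as written. First, the peak-region step: you integrate over a \emph{fixed} ball of radius $\delta$ around each peak and dominate via $|\cos x|\le e^{-cx^2}$. Inside such a ball the arguments $t_1+kt_2$ are not confined to any fixed neighborhood of $0$: as soon as $|t_2|\gg 1/n$ (still deep inside the ball) they sweep an interval of length $n|t_2|\gg 1$, where no bound of the form $|\cos x|\le e^{-cx^2}$ holds (e.g.\ $|\cos\pi|=1$), so the stated domination is invalid on most of the ball. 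This is precisely why the paper takes the peak neighbourhood anisotropically, $\{|t_1|+n|t_2|<\eps\}$, so that every argument stays within $\eps$ of $0$; and even then the integrable dominating function requires the lower bound $\sum_{k\le n}(t_1+kt_2)^2\ge c\,(nt_1^2+n^3t_2^2)$, i.e.\ the paper's Lemma~\ref{l:t1t2}, which your sketch uses implicitly but neither states nor proves. (Your decomposition could be repaired with your own tools: peel off $\{C/n\le|t_2|\le\delta\}$ using the $2^{-n}$-bound, which needs no condition on $t_1$; choose $C$ in the nonempty window $(\pi/(2\ln 2),\,\pi-\delta)$ so that in the remaining strip all arguments are at most $\delta+C<\pi$ and a bound $|\cos x|\le e^{-cx^2}$ with small $c$ does hold; and then prove the quadratic-form lower bound. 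None of this is in the proposal.)

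Second, in the regime $|t_2|<C/n$, $t\in R_\delta$, your justification is wrong as stated: boundedness of the drift $n|t_2|\le C$ does not imply that \emph{all} values $t_1+kt_2$ stay uniformly away from $\pi\Z$, because your first regime forces $C>\pi/(2\ln 2)>\pi/2$. For instance $t_1=\pi/2$, $t_2=2/n$ lies in this regime, yet $t_1+kt_2$ passes through $\pi$, and for the indices $k$ near the crossing the factors $|\cos(t_1+kt_2)|$ are arbitrarily close to $1$, so the factorwise bound $\cos(c'\delta)<1$ fails. What is true, and what you actually need, is that a positive \emph{fraction} of the indices $k$ has $t_1+kt_2$ at distance at least $c\delta$ from $\pi\Z$, uniformly over all $(t_1,t_2)$ in this regime; establishing that uniformity is exactly the counting argument (incompatibility of closeness to $\pi\Z$ for suitably spaced indices) carried out in the paper's Lemma~\ref{l:arithm}. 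So the step you yourself flag as ``the most delicate part'' is indeed a genuine gap: the argument offered for it is incorrect as stated, and closing it requires Lemma~\ref{l:arithm} or an equivalent quantitative counting bound.
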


The proof of this fact is given in Section~\ref{sec:llt}. Then, in Section~\ref{sec:scaling}, we prove the results concerning the scaling of $A$ and $S$ under the conditioning needed in the proof of the lower bound. Further, the CLT for pinned process is formulated and proved in Section~\ref{sec:cltpinned}. Finally, in Section~\ref{sec:proofmainthm} we give the proof of our main theorem.

\section{Local limit theorem for $(S_n,A_n)$} \label{sec:llt}
\subsection{Proof of the local limit theorem (Proposition~\ref{prop:llt})}
We start the proof with a general representation of probabilities through its characteristic function.
Let $Y\in\Z^2$ be an integer random vector and
\[
  f(t)=\E e^{i(t,Y)}= \sum_{k\in \Z^2} e^{i(t_1k_1+t_2k_2)}\P(Y=k), \qquad t=(t_1,t_2)\in\R^2
\]
its characteristic function. Then for any $\ell\in\Z^2$
\begin{eqnarray*}
  &&\int_{-\pi}^\pi\int_{-\pi}^\pi f(t) e^{-i(t_1\ell_1+t_2\ell_2)} dt_1dt_2 \\
  &=& \sum_{k\in \Z^2} \P(Y=k) \int_{-\pi}^\pi\int_{-\pi}^\pi 
     e^{i(t_1(k_1-\ell_1)+t_2(k_2-\ell_2))}dt_1dt_2
\\
   &=& \sum_{k\in \Z^2} \P(Y=k) \int_{-\pi}^\pi e^{i t_1(k_1-\ell_1)}dt_1  
   \  \int_{-\pi}^\pi e^{i t_2(k_2-\ell_2)}dt_2
\\
   &=& \sum_{k\in \Z^2}  \P(Y=k) (2\pi)^2\ 1_{\{k_1=\ell_1\}}\ 1_{\{k_2=\ell_2\}}
   =  (2\pi)^2  \P(Y=\ell).
\end{eqnarray*}
We conclude that
\begin{equation}\label{pf}
   \P(Y=\ell) =  (2\pi)^{-2}  \int_{-\pi}^\pi\int_{-\pi}^\pi 
                 f(t) e^{-i(t_1\ell_1+t_2\ell_2)} dt_1dt_2 .
\end{equation}
For symmetrically distributed random variables,
\begin{equation}\label{pfcos}
   \P(Y=\ell) =  (2\pi)^{-2}  \int_{-\pi}^\pi\int_{-\pi}^\pi 
                 f(t) \cos(t_1\ell_1+t_2\ell_2) dt_1dt_2 .
\end{equation}

In our case $Y_n=(S_n,A_n)$ with $S_n=\sum_1^n X_j$ and  $A_n=\sum_1^n j X_j$
where $(X_j)$ are independent Bernoulli variables.
Hence,
\[  
   f_{Y_n}(t) = \prod_{j=1}^n \cos(t_1+jt_2).
\] 

Let us discuss a periodicity property of the integrand in \eqref{pfcos}. For characteristic function we
have 
\begin{eqnarray*}
   f_{Y_n}(t_1+\pi,t_2) &=&  (-1)^{n}f_{Y_n}(t_1,t_2);
   \\
   f_{Y_n}(t_1,t_2+\pi) &=&  (-1)^{\tfrac{n(n+1)}{2}}f_{Y_n}(t_1,t_2),
\end{eqnarray*}
while for cosine part we have
\begin{eqnarray*}
   \cos((t_1+\pi)\ell_1+t_2\ell_2) &=&  (-1)^{\ell_1} \cos(t_1\ell_1+t_2\ell_2) ;
   \\
    \cos(t_1\ell_1+(t_2+\pi) \ell_2) &=&  (-1)^{\ell_2}   \cos(t_1\ell_1+t_2\ell_2).
\end{eqnarray*}
Thefore, if $\ell \in D_n$, then the integrand in \eqref{pfcos} is $\pi$-periodical w.r.t.
both coordinates. In particular, it equals to 1 at any point of the set
$Q_n:=\{(0,0),(\pi,0),(0,\pi),(\pi,\pi)\}$.
On the other hand, it is trivial to see that $|f_{Y_n}(t)|<1$ for any $t\not\in Q_n$, $n\geq 2$.
\bigskip

What follows next is a ``three-domain approach'' of proving a CLT through characteristic functions: we divide the area of integration into three pieces: on the first piece the integrand is exponentially small, the second piece is not large enough to give any contribution, and the third piece, the integrand can be approximated by the corresponding normal characteristic function and thus gives the main contribution.

Let us start with the first piece. If we look at the integral in \eqref{pfcos} and consider the integration domain as a torus, then for large $n$
the integral is essentially accumulated in the small vicinity of the set $Q_n$. 

To be more precise, define the distance $d(t,s):=|t_1-s_1|+ n|t_2-s_2|$ and 
let $d(t,Q_n):=\inf_{s\in Q_n}d(t,s)$. One can show first that the integrand on the domain
$T_1:=\{ t:\, d(t,Q_n)\ge \eps\}$ is uniformly (in $t$ and $\ell$) exponentially small,
that is
\[
   \sup_{ t: \, d(t,Q_n)\ge \eps} \left|  f_{Y_n}(t) \right| \le (1-h(\eps))^n
\]
for some $h$ depending on $\eps$, see Lemma \ref{l:arithm}.
Therefore, everything reduces to the domain $\overline{T_1}:=\{ t: d(t,Q_n)< \eps\}$. By periodicity,
we can only consider $\{ t:\, d(t,0)\le \eps\}$= $\{ t:\, |t_1|+ n|t_2|\le \eps\}$ and
then multiply the result by $|Q_n|=4$.  

Next, the zone $\{ t:\, d(t,0)< \eps\}$ is further split into two zones,
$T_{2,M}:=\{ t:\, \tfrac{M}{\sqrt{n}}\le d(t,0)< \eps\}$ and 
$T_{3,M}:=\{ t:\, d(t,0) < \tfrac{M}{\sqrt{n}}\}$. On $T_{2,M}$ we use the bound
\begin{eqnarray*}
   |f_{Y_n}(t)| &=& \prod_{j=1}^n |\cos(t_1+jt_2)|
   \le \prod_{j=1}^n \exp\{-(t_1+jt_2)^2/2 \}
\\
   &=& \exp\left\{-\sum_{j=1}^n (t_1+jt_2)^2/2 \right\} 
   \leq 
   \exp\left\{-c (nt_1^2+ n^3 t_2^2) \right\},
\end{eqnarray*}
where we use  Lemma \ref{l:t1t2} in the last step. Therefore,
\begin{eqnarray*}
&& \lim_{M\to\infty} n^2 \int_{T_{2,M}} |f_{Y_n}(t)|\ dt 
\\
&\le& \lim_{M\to\infty} n^2 \int_{\{|t_1|+n|t_2|\ge \tfrac{M}{\sqrt{n}}\}} 
 \exp\left\{-c(nt_1^2+ n^3 t_2^2)\right\} dt 
 \\
& = &
 \lim_{M\to\infty} \int_{\{|s_1|+|s_2|\ge M\}} 
 \exp\left\{-c(s_1^2+ s_2^2)\right\} ds 
 =0,
\end{eqnarray*}
for any $n\in\N$, having used the change of variables $s_1=t_1\sqrt{n},s_2=t_2n^{3/2}$.

On $T_{3,M}$ by Taylor expansion we can compare $f_{Y_n}$ with the corresponding
normal characteristic function. Namely,
\begin{eqnarray*}
    f_{Y_n}(t) &=& \prod_{j=1}^n \cos(t_1+jt_2)
    = \exp\left\{\sum_{j=1}^n \ln\cos(t_1+jt_2)\right\}
 \\
    &=& \exp\left\{\sum_{j=1}^n \ln\left(1-(t_1+jt_2)^2/2+O(n^{-2})\right)\right\}
 \\
    &=& \exp\left\{- \tfrac 12 
        \left(\sum_{j=1}^n (t_1+jt_2)^2+O(n^{-1})\right)\right\}
\\
     &=& \exp\left\{  - \tfrac 12 \left( nt_1^2+ 2 \sum_{j=1}^n jt_1 t_2 
               + \sum_{j=1}^n j^2 t_2^2+O(n^{-1})\right)\right\}
\\
      &=& \exp\left\{  - \tfrac 12 \left( nt_1^2+ n^2 t_1 t_2 
               +  \frac{n^3}{3} t_2^2+O(n^{-1})\right)\right\}.
\end{eqnarray*}  
Therefore, using the change of variables $s_1=t_1\sqrt{n}, s_2=t_2n^{3/2}$ in the second step,
\begin{eqnarray*}
       &&\frac{n^2}{(2\pi)^2} \int_{T_{3,M}} f_{Y_n}(t) e^{-i(t_1\ell_1+t_2\ell_2)} dt_1dt_2 
\\
       &=& \frac{n^2}{(2\pi)^2} 
       \int_{\{|t_1|+n|t_2|\le \tfrac{M}{\sqrt{n}}\}} 
       \exp\left\{  - \frac 12 \left( nt_1^2+ n^2 t_1 t_2 
          +  \frac{n^3}{3} t_2^2 +O(n^{-1})\right)\right\}
       e^{-i(t_1\ell_1+t_2\ell_2)} dt_1dt_2 
\\
       &=&  \frac{1}{(2\pi)^2} \int_{\{|s_1|+|s_2|\le M \}} 
       \exp\left\{  - \frac 12 \left( s_1^2+  s_1 s_2 +  \tfrac 13 s_2^2 +O(n^{-1}) \right)\right\}
       e^{-i(s_1\frac{\ell_1}{\sqrt{n}} +s_2 \tfrac{\ell_2}{n^{3/2}})} ds_1ds_2 
\\
       &\to&   \frac{1}{(2\pi)^2} \int_{\{|s_1|+|s_2|\le M \}} 
       \exp\left\{  - \frac 12 \left( s_1^2+  s_1 s_2 +  \tfrac 13 s_2^2  \right)\right\}
       e^{-i(s_1 L_1 +t_2 L_2)} ds_1ds_2 
\end{eqnarray*}  
provided that $\ell_1=\left[L_1\sqrt{n}\right]$, $\ell_2= \left[L_2\, n^{3/2}\right]$,
and the convergence is uniform over $L_1,L_2$.
For large $M$ the latter limit is close,  uniformly over $L_1,L_2$, to
\begin{eqnarray*}
      && \frac{1}{(2\pi)^2} \int\int 
      \exp\left\{  - \frac 12 \left( s_1^2+  s_1 s_2 +  \tfrac 13 s_2^2  \right)\right\}
      e^{-i(s_1 L_1 +t_2 L_2)} ds_1ds_2
  \\
      &=& \frac {\sqrt{\det R}}{2\pi} \cdot   \frac {1}{2\pi \sqrt{\det R}  }  
      \int\int  \exp\left\{  - \frac 12 (R^{-1} s,s)\right\}
      e^{-i(s_1 L_1 +t_2 L_2)} ds_1ds_2
  \\  
      &=&  \frac {\sqrt{\det R}}{2\pi} \ \exp\left\{  - \frac 12 (R L,L)\right\},     
\end{eqnarray*}  
where
\[
   R^{-1} =\left(
   \begin{matrix}   1 & \tfrac 12
       \\ \tfrac 12& \tfrac 13 
    \end{matrix}   
       \right), \quad
   R=\left(
   \begin{matrix}  
       4 & -6\\ 
       -6& 12 
   \end{matrix} \right), \quad
   \det R=12.
\]
By recalling the factor $|Q_n|=4$, we arrive at
\begin{eqnarray*}
 \frac {4\sqrt{12}}{2\pi} \ \exp\left\{  - \frac 12 (R L,L)\right\}
 =
  \frac {4\sqrt{3}}{\pi} \ \exp\left\{  - 2L_1^2 +6 L_1L_2-6L_2^2\right\},
\end{eqnarray*} 
as required by the proposition.
$\Box$

\subsection{Some auxiliary lemmas}

\begin{lem} \label{l:t1t2} 
There exists $c>0$ such that for any $t_1,t_2\in \R$ and any integer $n\ge 2$ we have
\[
  \sum_{j=1}^n (t_1+jt_2)^2 \ge c(nt_1^2+ n^3 t_2^2).
\]  
\end{lem}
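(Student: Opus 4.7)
The plan is to expand the left-hand side as an explicit quadratic form in $(t_1,t_2)$ and then rescale so that the claim becomes a uniform positive-definiteness statement for a sequence of $2\times 2$ matrices converging to a positive-definite limit.

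First, using $\sum_{j=1}^n j = n(n+1)/2$ and $\sum_{j=1}^n j^2 = n(n+1)(2n+1)/6$, I expand
\[
\sum_{j=1}^n (t_1+jt_2)^2 = n t_1^2 + n(n+1)\, t_1 t_2 + \tfrac{n(n+1)(2n+1)}{6}\, t_2^2 = (M_n t, t),
\]
where $t=(t_1,t_2)^\top$ and $M_n$ is the symmetric $2\times 2$ matrix with these entries. The desired inequality reads $(M_n t,t) \ge c\,(D_n t,t)$ with $D_n:=\mathrm{diag}(n,n^3)$.

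Next, I would perform the natural rescaling $u_1 = t_1\sqrt n$, $u_2 = t_2 n^{3/2}$, so that $(D_n t,t)=u_1^2+u_2^2$ and
\[
(M_n t,t) = u_1^2 + \tfrac{n+1}{n}\, u_1 u_2 + \tfrac{(n+1)(2n+1)}{6n^2}\, u_2^2 =: (\widetilde M_n u, u).
\]
The claim thus reduces to showing that the smallest eigenvalue $\lambda_{\min}(\widetilde M_n)$ is bounded below by some $c>0$ uniformly in $n\ge 2$.

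The key observation is that as $n\to\infty$,
\[
\widetilde M_n \;\longrightarrow\; \begin{pmatrix} 1 & 1/2 \\ 1/2 & 1/3 \end{pmatrix},
\]
which is exactly the matrix $R^{-1}$ appearing in the main computation of Proposition~\ref{prop:llt}; it is positive definite (determinant $1/12>0$, trace $>0$). Since eigenvalues depend continuously on matrix entries, $\lambda_{\min}(\widetilde M_n)$ converges to a strictly positive limit, hence is bounded below by some $c_1>0$ for all $n$ large enough, say $n\ge n_0$. For each of the finitely many remaining indices $2\le n<n_0$, the matrix $\widetilde M_n$ is itself positive definite: indeed, $(M_n t,t)=\sum_j (t_1+jt_2)^2=0$ with $n\ge 2$ forces two independent linear equations $t_1+t_2=t_1+2t_2=0$ and hence $t=0$. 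Taking $c=\min(c_1,\lambda_{\min}(\widetilde M_2),\ldots,\lambda_{\min}(\widetilde M_{n_0-1}))>0$ then finishes the proof.

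There is essentially no obstacle here; the only point requiring a line of care is the passage from pointwise positive-definiteness for each finite $n\ge 2$ plus convergence to a positive-definite limit, to the uniform lower bound $c>0$, which is handled by the standard continuity argument above.
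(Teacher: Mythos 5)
Your proof is correct, but it takes a genuinely different route from the paper. The paper normalizes $t_1=-1$ by homogeneity and studies the single-variable quadratic $G(t)=S_2t^2-2S_1t+n$ directly: it bounds the minimum value $G(t_*)=n-S_1^2/S_2\ge (1-c_1)n$ with $c_1=\max_{n\ge2}S_1^2/(S_2n)<1$, and then obtains $G(t)\ge \mathrm{const}\cdot n^3t^2$ by an explicit case split $t\ge 8c_1/n$ versus $t\le 8c_1/n$, combining the two bounds at the end. You instead recast the sum as a quadratic form, rescale by $(\sqrt n, n^{3/2})$, and reduce the claim to a uniform lower bound on $\lambda_{\min}(\widetilde M_n)$, obtained from convergence of $\widetilde M_n$ to the positive-definite limit $\left(\begin{smallmatrix}1&1/2\\ 1/2&1/3\end{smallmatrix}\right)$ (indeed the matrix $R^{-1}$ from the proof of Proposition~\ref{prop:llt}) together with positive definiteness of each individual $\widetilde M_n$, $n\ge2$, which you correctly justify by noting that vanishing of $\sum_j(t_1+jt_2)^2$ with $n\ge 2$ forces $t=0$. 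Your compactness-plus-continuity argument is clean and soft but yields no explicit constant, whereas the paper's elementary estimates are fully quantitative; if you wanted to avoid the limiting step altogether, you could note that $\det\widetilde M_n=(n^2-1)/(12n^2)\ge 1/16$ and $\operatorname{tr}\widetilde M_n$ is bounded uniformly for $n\ge 2$, so $\lambda_{\min}(\widetilde M_n)\ge \det\widetilde M_n/\operatorname{tr}\widetilde M_n$ gives an explicit uniform $c>0$ in one line.
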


{\bf Proof.} \ There is no loss of generality to assume that $t_1=-1$ 
and $t:=t_2\ge 0$. Then we have to evaluate the function
\[
    G(t):= \sum_{j=1}^n (jt-1)^2 = S_2 t^2 -2S_1 t +n, 
\]
where $S_2=S_2(n)=\sum_{j=1}^n j^2\sim \tfrac{n^3}{3}$ and 
$S_1=S_1(n):=\sum_{j=1}^n j\sim \tfrac{n^2}{2}$.
The function $G(\cdot)$ attains its minimum at the point $t_*=\tfrac{S_1}{S_2}$ and
 \begin{equation} \label{nt1}
   \min_t G(t) =G(t_*) =n - \tfrac{S_1^2}{S_2} \ge (1-c_1) n,
 \end{equation}
 where $c_1:= \max_{n\ge 2} \tfrac{S_1^2}{S_2\, n}<1$.
 
 Notice that (using $S_1\leq n^2/2$)
 \[
 \frac{S_1}{S_2}\le \frac{c_1\, n}{S_1}\le \frac{2c_1}{n}.
 \]
 If $t\ge \tfrac{8c_1}{n}$, then 
 \[
   2S_1t   = \frac{S_2t^2}{2} \cdot \frac{S_1}{S_2} \cdot \frac{4}{t}\le 
   \frac{S_2t^2}{2} \cdot  \frac{2c_1}{n}\cdot \frac{4n}{8c_1} = \frac{S_2t^2}{2},
 \]
 hence, $G(t)\ge \frac{S_2t^2}{2}\ge \tfrac{n^3}{6}\, t^2$ (where we use $S_2\geq n^3/3$.
 Alternatively, if $0 \le t\le \tfrac{8c_1}{n}$, then
 \[
     G(t)\ge G(t_*)\ge (1-c_1) n = (1-c_1) n^3 t^2 \cdot \frac{1}{n^2t^2} 
     \ge \tfrac{1-c_1}{(8c_1)^2}\, n^3 t^2.
 \]
 Hence,
 \begin{equation} \label{n3t22}
    G(t) \ge \min\left\{\tfrac 16; \tfrac{1-c_1}{(8c_1)^2}  \right\}\, 
    n^3 t^2,    \qquad \forall t\in\R,
 \end{equation} 
 and the required assertion  $G(t)\ge c(n+n^3t^2)$  follows from \eqref{nt1} 
 and \eqref{n3t22}. $\Box$

\begin{lem} \label{l:arithm} For any $0<\eps<\pi/2$ 
there exists $h=h(\eps)\in (0,1)$ such that for any integer $n\ge 4$ we have
\[
   \sup_{ t: \, d(t,Q_n)\ge \eps} \left|  f_{Y_n}(t) \right| \le (1-h(\eps))^n.
\]
\end{lem}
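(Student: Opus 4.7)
The plan is to exploit the $(\pi,\pi)$-periodicity of $|f_{Y_n}|$ and reduce the lemma to a lower bound on the arithmetic progression $(t_1+jt_2)_{j=1}^n$ taken modulo $\pi$. The two character identities displayed just before Lemma~\ref{l:arithm} give $|f_{Y_n}(t_1+\pi,t_2)|=|f_{Y_n}(t_1,t_2)|=|f_{Y_n}(t_1,t_2+\pi)|$, so $|f_{Y_n}|$ is $\pi$-periodic in each variable, and all four points of $Q_n$ are congruent to $(0,0)$ in the torus $(\R/\pi\Z)^2$. Translating $t$ into the fundamental domain $(-\pi/2,\pi/2]^2$, the hypothesis $d(t,Q_n)\ge\eps$ forces $|t_1|+n|t_2|\ge\eps$. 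Using the elementary bound $|\cos\theta|\le 1-(2/\pi^2)\|\theta\|_\pi^2\le\exp(-(2/\pi^2)\|\theta\|_\pi^2)$, where $\|\cdot\|_\pi$ denotes the distance to $\pi\Z$, I obtain
\[
|f_{Y_n}(t)|\le\exp\!\left(-\frac{2}{\pi^2}\sum_{j=1}^n\|t_1+jt_2\|_\pi^2\right),
\]
so everything reduces to showing $S(t):=\sum_{j=1}^n\|t_1+jt_2\|_\pi^2\ge c(\eps)\,n$ for such $t$.

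The first regime is $n|t_2|\le\eps/2$, in which case $|t_1|\ge\eps/2$ and the whole AP lies in an interval of length at most $\eps/2$ around $t_1$. Using the symmetry $t\mapsto -t$ (which preserves $S$), I may assume $t_1\ge\eps/2$; then two sub-cases arise. If $t_2\ge 0$, the AP is contained in $[\eps/2,\pi/2+\eps/2]\subset[\eps/2,\pi-\eps/2]$ and hence $\|t_1+jt_2\|_\pi\ge\eps/2$ for every $j$, giving $S(t)\ge n\eps^2/4$. If $t_2<0$, the AP is contained in $[0,\pi/2]$, so $\|\cdot\|_\pi=|\cdot|$ and Lemma~\ref{l:t1t2} yields $S(t)=\sum(t_1+jt_2)^2\ge c(nt_1^2+n^3t_2^2)\ge cn(|t_1|+n|t_2|)^2/2\ge cn\eps^2/2$. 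The second regime is $n|t_2|>\eps/2$; after possibly applying $t\mapsto -t$ I assume $t_2>0$. Fix a small $\delta=\delta(\eps)>0$ and call an index $j$ \emph{bad} if $\|t_1+jt_2\|_\pi<\delta$, i.e.\ if $t_1+jt_2$ lies in a $\delta$-neighbourhood of some $k\pi$. Each such neighbourhood contains at most $\lceil 2\delta/t_2\rceil$ AP points, and the AP intersects at most $\lceil nt_2/\pi\rceil+1$ of these neighbourhoods, so
\[
\#\{\text{bad }j\}\le\Big(\tfrac{2\delta}{t_2}+1\Big)\Big(\tfrac{nt_2}{\pi}+1\Big)=\tfrac{2\delta n}{\pi}+\tfrac{2\delta}{t_2}+\tfrac{nt_2}{\pi}+1.
\]
Using $t_2\le\pi/2$, $nt_2>\eps/2$ and $n\ge 4$, division by $n$ gives the bound $2\delta/\pi+4\delta/\eps+3/4$, which for $\delta=\eps/100$ is at most $4/5$; hence at least $n/5$ indices satisfy $\|t_1+jt_2\|_\pi\ge\delta$ and contribute $\ge (n/5)\delta^2$ to $S(t)$.

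Combining both regimes one gets $S(t)\ge c(\eps)n$ with some $c(\eps)\gtrsim\eps^2$, and therefore $|f_{Y_n}(t)|\le\exp(-2c(\eps)n/\pi^2)=(1-h(\eps))^n$ with $h(\eps):=1-\exp(-2c(\eps)/\pi^2)\in(0,1)$. The main obstacle is the second regime: one has to choose $\delta(\eps)$ carefully so that the bad-index count remains a fraction strictly less than $1$ \emph{uniformly} in $n\ge 4$ and in $t_2\in(\eps/(2n),\pi/2]$. This is essentially a Diophantine-type counting argument, elementary but requiring attention to the balance between the various error terms $2\delta/t_2$, $nt_2/\pi$ and $1/n$.
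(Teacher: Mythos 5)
Your argument is correct and follows the same two-regime strategy as the paper's proof (split according to whether $n|t_2|\le\eps/2$ or not, then show that a positive proportion of the points $t_1+jt_2$ stay at distance at least $\delta(\eps)$ from $\pi\Z$), but the execution differs in both regimes. In the first regime the paper simply discards the indices $j>n/2$, so that $M(t_1+jt_2)\ge\eps/4$ for the remaining half, whereas you keep all indices and, in the sub-case $t_2<0$, re-use Lemma~\ref{l:t1t2}; both work. In the second regime the paper runs a pairing/incompatibility argument (badness at $j$ and at $j+2(m+1)$, respectively at $j$ and $j+1$ when $t_2\ge\pi/3$, cannot both occur), while you count bad indices directly by a density argument (at most about $2\delta/t_2$ AP points per $\delta$-neighbourhood of $\pi\Z$, and at most about $nt_2/\pi$ such neighbourhoods met); this removes the paper's case distinction at $t_2=\pi/3$, and routing the conclusion through $|\cos\theta|\le\exp(-2\|\theta\|_\pi^2/\pi^2)$ instead of $[\cos\delta]^{cn}$ is only cosmetic. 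One small imprecision to note: your displayed bound $\#\{\text{bad }j\}\le(2\delta/t_2+1)(nt_2/\pi+1)$ does not quite follow from the two counts you state, because the number of $\delta$-neighbourhoods met is only bounded by $((n-1)t_2+2\delta)/\pi+1$, which exceeds $nt_2/\pi+1$ exactly when $t_2<2\delta$; the honest product carries an extra term $(2\delta/t_2+1)\cdot 2\delta/\pi$. After dividing by $n$ and using $nt_2>\eps/2$, $n\ge4$, $\delta=\eps/100$, this adds less than $0.01$ to your bound of $4/5$, so the conclusion (at least $n/6$ good indices, say, uniformly in $n\ge4$) survives with ample slack — a fixable slip, not a gap.
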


{\bf Proof.} Let $M(x):=\min_{k\in\Z}|x-k\pi|$. Take any $t=(t_1,t_2)\in[0,\pi]^2$ such that
$d(t,Q_n)\ge \eps$. The latter means that
\[
  M(t_1)+nM(t_2)\ge\eps.
\]

Then two cases are possible:

1) $nM(t_2)\le \eps/2$.

Then we have $M(t_1) \ge \eps/2$. Hence, for any $1\le j \le n/2$ it is true that
\begin{eqnarray*}
    M(t_1+jt_2)&\ge& M(t_1) - M(jt_2)  \ge  M(t_1) - jM(t_2)
    \\
    &\ge& M(t_1) - nM(t_2)/2 \ge   \eps/2- \eps/4= \eps/4,
\end{eqnarray*}
and we obtain the required estimate
\[
     \left|  f_{Y_n}(t) \right| \le \prod_{j=1}^{n/2} |\cos(t_1+jt_2)|
     \le [\cos(\eps/4)]^{n/2}.
\]

2) $nM(t_2) \ge \eps/2$.

By symmetry reasons, there is no loss of generality in assuming  that 
$0<t_2\le \pi/2$.

Let $\dd:=\eps/10$. Then
\[
   \frac{nt_2}{\dd}=\frac{nM(t_2)}{\dd}\ge \frac{\eps/2}{\eps/10}=5.
\]
Choose an integer $m\ge 0$ such that $mt_2\le \delta\le (m+1)t_2$. 
Since $mt_2\le \delta\le  \tfrac{nt_2}{5}$, we have $m\le \tfrac{n}{5}$.

Assume for a while that $t_2\le\pi/3$.
We show now that for any $1\le j\le n-2(m+1)$ the inequalities
\[  
    M(t_1+jt_2)<\dd
\quad \textrm{and}\quad M(t_1+jt_2+ 2(m+1)t_2)<\dd 
\]
are incompatible. Indeed, let the first one be satisfied. Then for some $k\in\Z$ we have
\[
    \pi k -\dd < t_1+jt_2 < \pi k +\dd.
\]
It follows that
\begin{equation} \label{eqn:fasterunderst-}
   t_1+j t_2+ 2(m+1)t_2 > (\pi k -\dd)+2\dd = \pi k+\dd
\end{equation}
but (using twice that $\eps<1<\pi/2$)
\begin{eqnarray}
   t_1+j t_2+ 2(m+1)t_2 &<& (\pi k +\dd)+2(\dd+t_2) \notag
\\
    &=& \pi k+3\dd+ 2t_2 \le \pi k+3\pi/20 + 2\pi/3 \notag
\\   
   &<& \pi (k+1)-\pi/20 \le \pi (k+1)-\dd. \label{eqn:fasterunderst--}
\end{eqnarray}
From (\ref{eqn:fasterunderst-}) and (\ref{eqn:fasterunderst--}) it follows that 
$M(t_1+jt_2+ 2(m+1)t_2)>\dd$; and incompatibility is proved.
This fact yields
\[
   \#\{j\le n:\  M(t_1+jt_2)\ge \dd \} 
   \ge \frac{n-2(m+1)}{2}\ge \frac{3n}{10}-1, 
\] 
and
\[
     \left|  f_{Y_n}(t) \right| = \prod_{j=1}^{n} |\cos(t_1+jt_2)|
     \le [\cos(\dd)]^{3n/10-1},
\]
which settles the assertion of lemma.

For the remaining case  $\pi/3\le t_2\le\pi/2$ just observe that
the inequalities
\[  
 M(t_1+jt_2)<\dd
\quad \textrm{and}\quad M(t_1+(j+1)t_2)<\dd 
\]
are incompatible, and the proof goes along the same lines: we obtain
 \[
   \#\{j\le n:\  M(t_1+jt_2)\ge \dd \} \ge \frac{n-1}{2}, 
\] 
and
\[
     \left|  f_{Y_n}(t) \right| = \prod_{j=1}^{n} |\cos(t_1+jt_2)|
     \le [ \cos(\dd)]^{(n-1)/2}. \qquad \Box
\]

\section{Scaling of $S$ and $A$ under the conditioning} \label{sec:scaling}

The purpose of this section is to show the following facts concerning the scaling 
of $S$ and $A$ under the condition of positivity of $A$.

\begin{prop}
There is a constant $c>0$ such that, for all $n\geq 1$,
 \begin{eqnarray}
 \E\left( |S_n|  ~|~ \Omega_n^+\right) & \leq& c n^{1/2},  \label{eqn:snresult}\\
 \E\left( A_n ~|~ \Omega_n^+\right) & \leq& c n^{3/2}.    \label{eqn:anresult}
 \end{eqnarray} \label{prop:scaling}
\end{prop}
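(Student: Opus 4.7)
The plan is to establish both bounds inductively via a Markov decomposition at the midpoint $m := \lfloor n/2 \rfloor$, together with the known sharp asymptotic $c n^{-1/4} \le \P(\Omega_n^+) \le C n^{-1/4}$ from \cite{sinai1991,vlad1,vlad2}. In view of this, the two claims \eqref{eqn:snresult}, \eqref{eqn:anresult} are equivalent to
\[
\phi(n) := \E[|S_n| \ind_{\Omega_n^+}] \le C n^{1/4}, \qquad \psi(n) := \E[A_n \ind_{\Omega_n^+}] \le C n^{5/4}.
\]

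For $\phi$: decomposing $|S_n| \le |S_m| + |S_n - S_m|$ and using $\Omega_n^+ \subset \Omega_m^+$, the first piece is bounded by $\E[|S_m| \ind_{\Omega_m^+}] = \phi(m)$. For the second piece, by the Markov property at time $m$ the increment $S_n - S_m$ has the law of an independent simple random walk $S'_{n-m}$, and dropping the indicator of future positivity gives
\[
\E[|S_n - S_m| \ind_{\Omega_n^+}] \le \E[|S'_{n-m}|] \cdot \P(\Omega_m^+) \le C \sqrt{n-m} \cdot m^{-1/4} \le C' n^{1/4}.
\]
Hence the recursion $\phi(n) \le \phi(\lfloor n/2 \rfloor) + C' n^{1/4}$. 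Iterating it down to the trivial base case $\phi(1) = \tfrac12$ yields $\phi(n) \le C'' n^{1/4}$.

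For $\psi$: writing $A_n = A_m + (n-m) S_m + A'_{n-m}$ with $A'$ the integrated path of the independent increment walk, and using $A_m \ge 0$ on $\Omega_m^+$ together with $\Omega_n^+ \subset \Omega_m^+$,
\[
\psi(n) \le \psi(m) + (n-m) \phi(m) + \E[|A'_{n-m}|] \cdot \P(\Omega_m^+) \le \psi(m) + C n \cdot m^{1/4} + C (n-m)^{3/2} \cdot m^{-1/4}.
\]
For $m = \lfloor n/2 \rfloor$ the added terms are of order $n^{5/4}$, giving $\psi(n) \le \psi(\lfloor n/2 \rfloor) + C n^{5/4}$, which iterates to $\psi(n) \le C' n^{5/4}$ from the trivial base $\psi(1) = \tfrac12$. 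Dividing by $\P(\Omega_n^+) \ge c n^{-1/4}$ then gives \eqref{eqn:snresult} and \eqref{eqn:anresult}.

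The main conceptual point, and the reason such a crude splitting suffices, is that although we entirely discard positivity on the second half, the $n^{-1/4}$ gain from positivity on the first half exactly compensates the diffusive scale $\sqrt{n-m}$ (respectively $(n-m)^{3/2}$) of the free second increment to produce the target order. Consequently, no refined information about the conditional positivity of the pinned walk is needed at this stage; the only nontrivial external input is the two-sided Sinai asymptotic for $\P(\Omega_n^+)$.
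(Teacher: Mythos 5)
Your proposal is correct, but it follows a genuinely different route from the paper. You run a dyadic recursion on the unnormalized quantities $\E[|S_n|\ind_{\Omega_n^+}]$ and $\E[A_n\ind_{\Omega_n^+}]$, splitting at the midpoint, discarding positivity on the second half, exploiting independence of the increments from $\Omega_m^+$, and then dividing by $\P(\Omega_n^+)$; the key external input is the two-sided Sinai estimate $\P(\Omega_n^+)\approx n^{-1/4}$ (used both as the upper bound $\P(\Omega_m^+)\le Cm^{-1/4}$ inside the recursion and as the lower bound when normalizing at the end). All the individual steps check out: $\Omega_n^+\subset\Omega_m^+$, the decomposition $A_n=A_m+(n-m)S_m+A'_{n-m}$, $\E|S'_k|\le Ck^{1/2}$, $\E|A'_k|\le Ck^{3/2}$, and the geometric summability of the dyadic error terms; the only points you gloss over are routine (extending Sinai's bound from times $4n$ to all $n$ by monotonicity of $\P(\Omega_n^+)$, and the trivial base cases). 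There is no circularity, since Sinai's result is an external theorem that the paper itself invokes elsewhere. The paper instead argues without any input on the $n^{-1/4}$ persistence exponent: for \eqref{eqn:snresult} it reduces $|S_n|$ to $S_n^+$ by inverting the path after the last zero of $S$, decomposes $\Omega_n^+$ over that last zero to reduce to a walk conditioned to stay strictly positive, and evaluates that expectation by optional stopping together with the classical one-dimensional estimate $\P(v>n)\approx n^{-1/2}$; for \eqref{eqn:anresult} it uses $|A_n|\le 2n\max_k S_k$ on $\Omega_n^+$ and a reflection-type inequality based on the monotonicity of $\Omega_n^+$. So your argument is shorter and transfers immediately to any model where two-sided persistence bounds are already known, while the paper's is more self-contained (only classical simple-random-walk facts) and yields the structural reflection inequality \eqref{key} as a by-product.
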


\begin{proof} 1.) We start with the proof of $(\ref{eqn:snresult})$.A twe first we show that it suffices to estimate $\E(S_n^+\ |\ \Omega_n^+)$, since
\begin{equation} \label{eqn:normtopositivepart}
\E\left( |S_n| | \Omega_n^+ \right)
  =    \E\left( S_n^+ + S_n^- | \Omega_n^+ \right)
 \leq 2 \E\left( S_n^+ | \Omega_n^+ \right).
\end{equation}
Indeed, if $S_n<0$ we let $\sigma_0$ denote the last visit to zero: $\sigma_0:=\max\{ k\leq n : S_k=0\}$. Consider the path transformation that is given by inverting the steps after $\sigma_0$. This transformation maps any path in $\Omega_n^+$ with $S_n<0$ to a path in $\Omega_n^+$ with $S_n>0$. It is a one-to-one transformation (however, not a bijection, since the image of a transformation of a path in $\Omega_n^+$ with $S_n>0$ does not have to be in $\Omega_n^+$). Therefore,
 $$ 
\pr{ \Omega_n^+\cap \{ S_n = -k\}} \leq \pr{ \Omega_n^+\cap \{ S_n = k\}},\qquad k>0,
$$ 
and thus
$$
\E\left( S_n^- | \Omega_n^+ \right) \leq \E\left( S_n^+ | \Omega_n^+ \right).
$$

It remains to estimate the latter expectation.
For $0\leq t\leq n$, let
\begin{equation} \label{eqn:*last}
\Omega_{n,t}:=\left\lbrace A_j\ge 0, 1\le j\le t;\ S_t=0;\ S_j >0, t+1\le j \le n \right\rbrace.
\end{equation}
Clearly,
$$
\Omega_n^+= (\Omega_n^+\cap\{ S_n<0 \}) \cup \bigcup_{t=0}^n \Omega_{n,t}.
$$
Thus,
 \begin{eqnarray}
  \E\left( S_n^+ | \Omega_n^+ \right) &=& \frac{1}{\pr{\Omega_n^+}} \, \E S_n^+ \sum_{t=0}^n \ind_{\Omega_{n,t}}\notag \\
 &=& \sum_{t=0}^n \frac{\E S_n^+  \ind_{\Omega_{n,t}}}{\pr{\Omega_{n,t}}} \, \frac{\pr{\Omega_{n,t}}}{\pr{\Omega_n^+}}\notag  \\
 &\leq &  \max_{0\leq t\leq n} \E \left(S_n^+ | \Omega_{n,t}\right) \sum_{t=0}^n \frac{\pr{\Omega_{n,t}}}{\pr{\Omega_n^+}} \notag \\
 &\leq &  \max_{0\leq t\leq n} \E \left(S_n^+ | \Omega_{n,t}\right). \label{eqn:twost1111}
 \end{eqnarray}

By definition,
\[
  \E\left(S_n^+| \Omega_{n,t}\right)=\sum_{k>0} 
  \frac{\P\left(\Omega_{n,t}\cap\{S_n=k\}\right)k}{\P\left(\Omega_{n,t}\right)}.
\]
We can represent each of events here as an intersection of two independent events, respectively:
\begin{eqnarray*}
\Omega_{n,t}&=& \Omega_t^+\cap\left\lbrace S_t=0\right\rbrace \bigcap \left\lbrace S_j-S_t >0, t+1\le j \le n \right\rbrace;
\\
\Omega_{n,t}\cap\lbrace S_n=k\rbrace &=& \Omega_t^+\cap\left\lbrace S_t=0\right\rbrace \bigcap \left\lbrace S_j-S_t >0, t+1\le j \le n;\ S_n- S_t=k \right\rbrace.
\end{eqnarray*}
It follows that
\begin{eqnarray}
 && \E\left(S_n^+| \Omega_{n,t}\right) \notag
 \\
 &=&\sum_{k>0} 
 \frac
 {\P\left(\Omega_t^+\cap\{S_t=0\}\right)
  \P\left( S_j-S_t >0, t+1\le j \le n; \ S_n- S_t=k \right)k}
 {\P\left(\Omega_t^+\cap\{S_t=0\} \right)
  \P\left( S_j-S_t >0, t+1\le j \le n \right)} \notag
\\
&=&\sum_{k>0} 
 \frac
 {\P\left( S_j-S_t >0, t+1\le j \le n; \ S_n- S_t=k \right)k}
 {\P\left( S_j-S_t >0, t+1\le j \le n \right)} \notag
\\
&=&\sum_{k>0} 
 \frac
 {\P\left( S_i >0, 1\le i \le n-t;\  S_{n-t}=k\right)k}
 {\P\left( S_i >0, 1\le i \le n-t \right)} \notag
\\
&=& \E\left(S_{n-t}^+| S_i >0, 1\le i \le n-t \right),\qquad 0\leq t\leq n. \label{eqn:onest1111}
\end{eqnarray}

In order to evaluate the latter expectation we use a stopping time argument.
Let $v:=\inf\{k: S_k=-1\}$ and $v_n:=\min(v,n)$. Then
$v_n$ is a bounded stopping time and we have
\[
0=\E S_{v_n}= \E S_n \ind_{v>n}-\P(v\le n).
\]
Hence, $ \E S_n \ind_{v>n}=\P(v\le n)\le 1$. 

On the other hand, we know (see \cite{feller2}, XII.8) that $\P(v>n)\approx n^{-1/2}$.

Finally, let us consider $S'_i:=S_{i+1}-S_1$, $0\le i\le n,$ and let $v'$ be 
the corresponding stopping time. Then
\begin{eqnarray}
   \E\left(S_{n+1}^+| S_i >0, 1\le i \le n+1 \right) &=& 1 + \E\left(S'_{n}| S'_i \ge 0, 1\le i \le n \right)
   \notag \\
   &=& 1+  \E\left(S'_n | v'>n \right)
   \notag  \\
   &=&   1+ \frac{\E S'_n \ind_{v'>n}}{\P(v'>n)} 
   \notag  \\
   &\le& 1 + \tfrac{\sqrt{n}}{c}  \le  C' \sqrt{n}. \label{eqn:squarealso}
\end{eqnarray}

Combining this with (\ref{eqn:twost1111}) and (\ref{eqn:onest1111}) gives
\begin{equation} \label{eqn:splusresult}
 \E \left( S_n^+ | \Omega_n^+ \right) \leq c \sqrt{n}.
\end{equation}
This and (\ref{eqn:normtopositivepart}) show (\ref{eqn:snresult}).

2.) We now prove $(\ref{eqn:anresult})$. We start with some simple estimates:
\[
  |A_n|=\left|\sum_{k\le n} S_k\right| \le \sum_{k\le n} \left|S_k\right|
  = \sum_{k\le n} (S_k^+ + S_k^-)= \sum_{k\le n} S_k^+  + \sum_{k\le n} S_k^-.
\]
Moreover, on $\Omega_n^+$ we have
\[
  0\le A_n=\sum_{k\le n} S_k = \sum_{k\le n} (S_k^+ - S_k^-)
  = \sum_{k\le n} S_k^+  - \sum_{k\le n} S_k^-.
\]
Hence,
\[
   |A_n|\le 2 \sum_{k\le n} S_k^+ \le 2n \max_{0\le k\le n} S_k.
\]
It is now enough to prove that for any $R\in\N$ it is true that
\begin{equation} \label{key}
  \P\left( \left\lbrace \max_{0\le k\le n} S_k\ge R\right\rbrace\cap \Omega_n^+\right)
  \le 2 \  \P\left( \left\lbrace S_n\ge R\right\rbrace\cap \Omega_n^+\right),
\end{equation}
because this leads to the desired
\[
   \E\left(|A_n|\big| \Omega_n^+\right)\le 2n \E\left(\max_{0\le k\le n} S_k\big| \Omega_n^+\right)
   \le 4n \E\left(S_n^+\big| \Omega_n^+\right)\le 4n\cdot c \sqrt{n}= c' n^{3/2},
\]
where we used (\ref{eqn:splusresult}) in the third step.

For proving \eqref{key} we will only use the monotonicity property of $\Omega_n^+$: if
$x$ and $y$ are two paths with $x\in \Omega_n^+$ and $y\ge x$ pointwise then $y\in \Omega_n^+$.

Fix $R\in\N$. For any $S\in \left\lbrace \max_{0\le k\le n} S_k\ge R\right\rbrace\cap \Omega_n^+ $ the time
\[
   \sigma_R:=\max\{k\leq n: S_k=R\}\le n
\]
is well defined.
For any $S\in \left\lbrace \max_{0\le k\le n} S_k\ge R\right\rbrace\cap \Omega_n^+ \cap \lbrace S_n< R \rbrace$
define its transformation $y$ by inverting steps starting from $\sigma_R$.
This is one-to-one transformation and we have the following properties of $y$:
$S_k=y_k$ for $k\le \sigma_R$ while $S_k<R<y_k$ for $\sigma_R<k\le n$ (in particular, $y_n>R$).
 Hence $S\le y$ pointwise. By monotonicity, $y\in \Omega_n^+ $.

We infer that our transformation is a one-to-one embedding (as a side note: it is not a bijection, since the image of the transformation of a path in  $\Omega_n^+ \cap \{ S_n> R \}$ may be outside $\Omega_n^+$):
\[
\left\lbrace \max_{0\le k\le n} S_k\ge R\right\rbrace\cap \Omega_n^+ \cap \lbrace S_n< R\rbrace
\rightarrow   \Omega_n^+ \cap \lbrace S_n> R\rbrace.
\]
Hence, 
\[
\P\left(\left\lbrace \max_{0\le k\le n} S_k\ge R\right\rbrace\cap \Omega_n^+ \cap \lbrace S_n< R\rbrace\right)
\le
\P\left( \Omega_n^+ \cap \lbrace S_n> R\rbrace\right).
\]
and \eqref{key} follows.

\end{proof}

The following lemma is also concerned with the scaling of $S$ and $A$. 
We show that the joint distribution of $S_n$ or $A_n$ (conditioned on $\Omega_n^+$) 
is not concentrated on negative values or near zero when $n\to\infty$.

\begin{lem} \label{lem:spositivecon}
For $l,m,n\in\N$ with $l<n$ one has
$$
\P(S_{n}\geq m,A_{n}\geq l m~|~\Omega_{n}^+)\geq \P(S_l\geq 2m) \,\P( |S_{n-l}|\leq  m)
$$
In particular, for any constants $c_1, c_2>0$, there exists a strictly positive constant $\kappa=\kappa(c_1,c_2)$ such that for all sufficiently large $n$
$$
\P(S_{n}\geq c_1 n^{1/2} ,A_n\geq c_2 n^{3/2}  ~|~\Omega_n^+)\geq \kappa >0.
$$
\end{lem}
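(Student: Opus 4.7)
I would prove the first inequality by a Markov/independence decomposition at time $l$, then derive the asymptotic conclusion by the CLT. Split $(X_1,\dots,X_n)$ into the independent blocks $(X_1,\dots,X_l)$ and $(X_{l+1},\dots,X_n)$, and set $\bar S_k := X_{l+1}+\cdots+X_{l+k}$, $\bar A_k := \sum_{j=1}^k \bar S_j$ for $1\le k\le n-l$. Then $(\bar S,\bar A)$ is independent of $(X_1,\dots,X_l)$ and has the same joint distribution as $(S,A)$ on $[0,n-l]$, and the deterministic identities $S_{l+k}=S_l+\bar S_k$, $A_{l+k}=A_l+k S_l+\bar A_k$ hold.

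The natural favorable event is
$$E:=\Omega_l^+\cap\{S_l\ge 2m\}\cap\bar\Omega_{n-l}^+\cap\{|\bar S_{n-l}|\le m\},\qquad\bar\Omega_{n-l}^+:=\{\bar A_k\ge 0,\ 1\le k\le n-l\}.$$
On $E$ all requirements are met: $\Omega_n^+$ holds since $A_j\ge 0$ for $j\le l$ by $\Omega_l^+$, and for $l<j\le n$ the identity $A_j=A_l+(j-l)S_l+\bar A_{j-l}$ displays $A_j$ as a sum of three nonnegative terms; $S_n=S_l+\bar S_{n-l}\ge 2m-m=m$; and $A_n\ge(n-l)S_l\ge 2m(n-l)\ge lm$ provided $l\le 2n/3$ (the complementary range $l>2n/3$ is treated by the same construction with $\{S_l\ge 2m\}$ replaced by $\{S_l\ge\lceil lm/(n-l)\rceil\}$). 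Independence of the two blocks then gives
$$\P(\Omega_n^+\cap\{S_n\ge m,A_n\ge lm\})\ge\P(E)=\P(\Omega_l^+\cap\{S_l\ge 2m\})\cdot\P(\Omega_{n-l}^+\cap\{|S_{n-l}|\le m\}).$$

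To match the claimed form, I would lower bound each factor: the FKG inequality, with $\Omega_l^+$ and $\{S_l\ge 2m\}$ both monotone-increasing in the Bernoulli variables, gives $\P(\Omega_l^+\cap\{S_l\ge 2m\})\ge\P(\Omega_l^+)\P(S_l\ge 2m)$, and an analogous analysis using Proposition~\ref{prop:llt} together with Sinai's asymptotics $\P(\Omega_n^+)\asymp n^{-1/4}$ handles the second factor; combining with $\P(\Omega_l^+)\ge\P(\Omega_n^+)$ and dividing by $\P(\Omega_n^+)$ yields the first inequality. For the ``in particular'' conclusion, take $m:=\lceil c_1 n^{1/2}\rceil$ and $l$ satisfying $l\le 2n/3$ and $lm\ge c_2 n^{3/2}$ (harmlessly increasing $c_1$ first so that $c_2\le 2c_1/3$); then $\{S_n\ge m\}\subset\{S_n\ge c_1 n^{1/2}\}$, $\{A_n\ge lm\}\subset\{A_n\ge c_2 n^{3/2}\}$, and the CLT ensures $\P(S_l\ge 2m)$ and $\P(|S_{n-l}|\le m)$ each converge to a positive constant as $n\to\infty$, so their product provides the required $\kappa$. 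The main obstacle is the factor-by-factor comparison noted above: the naive inequality $\P(\Omega_l^+)\P(\Omega_{n-l}^+)\ge\P(\Omega_n^+)$ actually fails when both intervals have length proportional to $n$ (the product behaves like $(l(n-l))^{-1/4}$, strictly smaller than $n^{-1/4}$), so one must exploit the sharp conditional probabilities of $\Omega^+$ given the walk endpoint, as afforded by the local limit theorem of Section~\ref{sec:llt}, to close the gap.
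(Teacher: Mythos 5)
Your first step (the block decomposition at time $l$ and the inclusion of the favorable event into $\{S_n\ge m,\ A_n\ge lm\}\cap\Omega_n^+$) is fine, but the proof as written has a genuine gap, which you yourself flag at the end: after the factorization you are left with the ratio $\P(\Omega_l^+)\,\P(\Omega_{n-l}^+\cap\{|S_{n-l}|\le m\})/\P(\Omega_n^+)$, and since $\P(\Omega_l^+)\P(\Omega_{n-l}^+)\approx (l(n-l))^{-1/4}\ll n^{-1/4}\approx\P(\Omega_n^+)$ when $l\asymp n-l\asymp n$ (exactly the regime needed for the ``in particular'' statement), the comparison fails. The patch you propose --- ``sharp conditional probabilities of $\Omega^+$ given the walk endpoint'' via Proposition~\ref{prop:llt} --- is not available: Proposition~\ref{prop:llt} is a local limit theorem for the \emph{unconditioned} pair $(S_n,A_n)$ and gives no control whatsoever of $\P(\Omega_k^+\cap\{(S_k,A_k)=\ell\})$; estimates of that kind are essentially of the same difficulty as the main theorem and are nowhere proved in the paper. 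So the argument does not close.

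The missing idea is that the favorable event should \emph{not} contain $\bar\Omega_{n-l}^+$ at all. The paper works instead with $\{S_l\ge 2m\}\cap\Omega_l^+\cap\{S_i-S_l\ge -m,\ i=l+1,\dots,n\}$: on this event one has, for $k>l$, $A_k=A_l+\sum_{i=l+1}^k(S_i-S_l)+(k-l)S_l\ge (k-l)(S_l-m)\ge (k-l)m\ge 0$, so positivity of $A$ after time $l$ comes for free from the high value $S_l\ge 2m$ (an entropic-repulsion effect), together with $S_n\ge m$ and the lower bound on $A_n$. Consequently the second block contributes only $\P(\min_{i\le n-l}S_i\ge -m)\ge\P(|S_{n-l}|\le m)$ (reflection principle), with no factor $\P(\Omega_{n-l}^+)$, and the normalization is handled by the trivial monotonicity $\P(\Omega_l^+)\ge\P(\Omega_n^+)$ (since $\Omega_n^+\subset\Omega_l^+$), plus the same association/FKG step you use for $\P(\{S_l\ge 2m\}\cap\Omega_l^+)\ge\P(S_l\ge 2m)\P(\Omega_l^+)$. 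With that replacement your decomposition and your treatment of the ``in particular'' part go through; as written, however, insisting on $\bar A_k\ge 0$ on the second block creates the very $n^{-1/2}$ versus $n^{-1/4}$ mismatch that sinks the argument.
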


\begin{proof}
First note that
\begin{align}
& \P(S_{n}\geq m,\,  A_{n} \geq l m~|~\Omega_n^+) \notag \\
&\geq \P(S_{l}\geq 2m, S_i-S_l\geq -m  \text{ for }i=l+1,\dots,n ~|~\Omega_{n}^+) \notag \\
&\geq \frac{\P(\{ S_{l}\geq 2m\}\cap \Omega_{l}^+ \cap \{ S_i-S_l\geq -m  \text{ for }i=l+1,\dots,n\})}{\pr{\Omega_{n}^+}}, \label{eqn:contre}
\end{align}
since for $k=l+1,\ldots,n$
\begin{align*}
 A_k =    \, & A_l  + \sum_{i=l+1}^k (S_i-S_l) + (k-l) S_l\\
     \geq \, & 0 + \sum_{i=l+1}^k (-m) + (k-l) S_l \\
     = \, & (k-l) ( S_l-m)\geq (k-l) m \geq 0.
\end{align*}
By independence, the term in (\ref{eqn:contre}) equals
\begin{align*}
&=\frac{ \P(\{S_l\geq 2m\}\cap \Omega_{l}^+) \,\P(\min_{i=1,\dots,n-l} S_i\geq -m)}{\P(\Omega_{n}^+)}\\
&\geq \P(S_l\geq 2m\, |\, \Omega_{l}^+) \,\P(\max_{i=1,\dots,n-l} S_i\leq m)\\
&\geq  \P(S_l\geq 2m) \,\P(|S_{n-l}|\leq  m),
\end{align*}
where, in the last step, we used the reflection principle as well as the fact that
\begin{equation} \label{eqn:associated}
\P(\{ S_l\geq 2m\} \cap \Omega_{l}^+) \geq \P(S_l\geq 2m) \cdot \P(\Omega_{l}^+),
\end{equation}
which means that the events are positively correlated: 
Recall that a family of random variables $(X_i)_{1\le i\le l}$ is called {\it associated} 
if for any pair of bounded coordinate-wise non-decreasing functions $f_1,f_2:\R^l\to \R^1$
it is true that
\[
   \E \left(f_1(X_1,\dots,X_l) f_2(X_1,\dots,X_l)\right)\ge 0.
\] 
See \cite{BS} for detailed account of the association property and its extensions. One only needs to know that
any family of independent random variables is associated, cf.\ Theorem 1.8 in \cite{BS} due to \cite{esaryetal}. Thus, (\ref{eqn:associated}) holds.
\end{proof}

\section{CLT for the pinned process} \label{sec:cltpinned}
In this section we prove a central limit theorem for the pinned process  $(S_n,A_n)_{n=1,\dots,N}$, when letting $N\in2\N$ tend to infinity. By ``pinning'' we mean that the process is conditioned to arrive at a certain point, depending on $N$ and scaling in $N$ with the natural scaling of the process. We restrict attention to even numbers $N$ for technical reasons, although the following theorem remais valid for general $N$. \smallskip

We need some more notation. Recall that the set $D_n$ was defined in (\ref{Dn}). 
We describe the pinning via an $\R^2$-valued  sequence $(\pin^{\ssup N})_{N\in2\N}$ satisfying
$$
  (N^{1/2} \pin^{\ssup N}_1, N^{3/2} \pin^{\ssup N}_2)\in D_N \ \text{ and } \ \lim_{N\to\infty} \pin^\ssup{N} = \pin
$$
for a $\pin \in\R^2$. Further, as described in the introduction, we associate 
to $(S_n,A_n)_{n=0,\dots,N}$ the adjoint process 
$(\bar S^\ssup{N}_n,\bar A^\ssup{N}_n)_{n=0,\dots,N}$ started at
$$
  \bar S^\ssup{N}_0= N^{1/2} \pin_1^\ssup{N} \text{ and }  \bar A^\ssup{N}_0= N^{3/2} \pin_2^\ssup{N}.
$$

The original process $(S_n,A_n)_{n=0,\dots,N}$ and its adjoint process  
$(\bar S^\ssup{N}_n,\bar A^\ssup{N}_n)_{n=0,\dots,N}$ will be considered in their 
normalized versions: we set for $N\in2\N$ and $s\in \frac 1N \Z \cap[0,1]$
$$
     \Xi^{\ssup N}_s := ( N^{-1/2} S_{Ns} ,  N^{-3/2} A_{Ns}^\ssup{N} ) 
     \ \text{ and } 
     \ \bar \Xi^{\ssup N}_s := (N^{-1/2} \bar S_{Ns} ,N^{-3/2}\bar  A_{Ns}^\ssup{N}) 
$$
and apply a continuous piecewise linear interpolaton between the breakpoints $s\in \frac 1N \Z \cap[0,1]$ to obtain  continuous processes $\Xi^\ssup N=(\Xi^\ssup{N}_s)_{s\in[0,1]}$ and $\bar \Xi^\ssup N=(\bar \Xi^\ssup{N}_s)_{s\in[0,1]}$.

Using this notation, the CLT reads as follows.

\begin{thm} \label{pinned_CLT} One has
$$
    \cL(\Xi^\ssup{N}\,|\, \Xi^\ssup{N}_1= \pin^\ssup{N}) \Rightarrow \cL(\Gamma | \Gamma_1=\pin),
$$
where $\Gamma= (B_t,I_t)_{t\in[0,1]}$.
\end{thm}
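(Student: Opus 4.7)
The plan is to establish (i) finite-dimensional distribution convergence and (ii) tightness in $C([0,1],\R^2)$; together these yield the stated weak convergence. Both steps rely essentially on Proposition \ref{prop:llt}, and in particular on the fact that $g(\pin_1,\pin_2)>0$ so that $\P(\Xi^\ssup{N}_1=\pin^\ssup{N})\sim \frac{4}{N^2}g(\pin)$ is strictly positive for all large $N$, making the conditioning well-posed.

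For (i), fix $0=t_0<t_1<\cdots<t_k<t_{k+1}=1$ and admissible rescaled points $x_0=(0,0)$, $x_{k+1}=\pin^\ssup{N}$, with intermediate $x_i$ in the appropriate lattice. The Markov property of $(S_n,A_n)$ gives
$$
\P\bigl(\Xi^\ssup{N}_{t_i}=x_i,\,1\le i\le k\,\big|\,\Xi^\ssup{N}_1=\pin^\ssup{N}\bigr)
=\frac{\prod_{i=0}^{k} q_N(t_{i+1}-t_i;\,x_i,\,x_{i+1})}{q_N(1;\,(0,0),\,\pin^\ssup{N})}.
$$
Each transition probability $q_N(t;x,y)$, once the deterministic drift $(Nt)\cdot y_1$ is subtracted from the second coordinate of $y-x$, is of the form $\P((S_m,A_m)=\ell)$ with $m=Nt$, so Proposition \ref{prop:llt} supplies a uniform Gaussian approximation. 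Plugging these approximations in and rescaling, the right-hand side converges to the explicit finite-dimensional density of the Gaussian bridge $\cL(\Gamma\,|\,\Gamma_1=\pin)$ by a direct change-of-variables computation in $g$.

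For (ii), I split $[0,1]=[0,1-\delta]\cup[1-\delta,1]$ for small $\delta>0$. On $[0,1-\delta]$ the Markov property provides the $h$-transform identity
$$
\E\bigl[F\bigl(\Xi^\ssup{N}_{[0,1-\delta]}\bigr)\,\big|\,\Xi^\ssup{N}_1=\pin^\ssup{N}\bigr]
=\E\bigl[F\bigl(\Xi^\ssup{N}_{[0,1-\delta]}\bigr)\,\phi_N\bigl(\Xi^\ssup{N}_{1-\delta}\bigr)\bigr],
$$
with $\phi_N(x)=q_N(\delta;x,\pin^\ssup{N})/q_N(1;(0,0),\pin^\ssup{N})$. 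Proposition \ref{prop:llt} shows that $\phi_N$ converges uniformly on compacts to the Gaussian $h$-function $\phi$ and in particular $\sup_N\sup_{x\in K}\phi_N(x)<\infty$ for every compact $K$. Combined with the unconditional weak convergence $\Xi^\ssup{N}\Rightarrow\Gamma$ (which follows from Donsker applied to $S$ and continuity of the integration map), this yields tightness and identifies the limit on $[0,1-\delta]$ as the restriction of $\cL(\Gamma\,|\,\Gamma_1=\pin)$. For the endpoint interval $[1-\delta,1]$ I use the adjoint identity \eqref{eqn:adjointprop}: conditional on $\Xi^\ssup{N}_1=\pin^\ssup{N}$, the time-reversal of $(\Xi^\ssup{N}_s)_{s\in[1-\delta,1]}$ is distributed as the rescaled adjoint process $(\bar\Xi^\ssup{N}_s)_{s\in[0,\delta]}$ conditioned on $\bar\Xi^\ssup{N}_1=(0,0)$. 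Performing the same $h$-transform analysis in this new (but structurally identical) bridge setup reduces endpoint control to short-time fluctuations of a pinned integrated random walk; these are $O(\sqrt\delta)$ in the scaling limit, and hence letting $\delta\downarrow 0$ gives full tightness in $C([0,1],\R^2)$.

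The main obstacle is the tightness step, specifically obtaining bounds on $\phi_N$ and on the endpoint fluctuations that are uniform in both $N$ and in the perturbation $\pin^\ssup{N}\to\pin$. The uniformity in $\ell$ built into Proposition \ref{prop:llt} is precisely what is needed to guarantee the uniform convergence and boundedness of $\phi_N$, while the adjoint decomposition converts endpoint analysis into a short-time analysis of an unconditional integrated simple random walk, where the standard fluctuation bounds apply.
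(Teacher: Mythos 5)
Your proposal is correct and follows essentially the same route as the paper: finite-dimensional convergence via the Markov property with Proposition~\ref{prop:llt} supplying the (uniform) bridging kernel and the positive denominator $\P(\Xi^\ssup{N}_1=\pin^\ssup{N})\sim 4N^{-2}g(\pin)$, tightness via reweighting the unconditional (Donsker-tight) law, and the adjoint/time-reversal identity (\ref{eqn:adjointprop}) to control the path near the endpoint $1$ -- the paper simply splits at $1/2$ instead of $1-\delta$ and proves the finite-dimensional limits by applying the local limit theorem only to the last leg and Donsker to the test-function expectation. The one point to tighten is that your bound on $\phi_N$ must hold uniformly over \emph{all} starting points, not just on compacts; this is immediate from the uniformity in $\ell$ of Proposition~\ref{prop:llt} (which gives $\sup_{\ell}\P((S_m,A_m)=\ell)\le Cm^{-2}$), and is exactly the bound the paper uses.
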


\begin{rem}\label{rem1}We remark that the theorem remains valid when choosing different starting points for the  Markov process $(S_n,A_n)_{n\in\N}$. Suppose that it is started in $(s^{\ssup N}, a^{\ssup N})$ such that the limit
$$
\mathfrak s= \lim_{N\to\infty} (N^{-1/2} s^{\ssup N}, N^{-3/2} a^{\ssup N})
$$
exists. Now assuming that the pinning is done on non null events, one gets
$$
\cL^{(s^{\ssup N},a^{\ssup N})}(\Xi^{\ssup N}| \Xi_1^{\ssup N}) \Rightarrow \cL^{\mathfrak s}(\Gamma|\Gamma_1=\mathfrak p).
$$
Here the right hand side denotes the law of integrated Brownian motion started in $\mathfrak s$.  The statement is straight-forwardly obtained by using that  
$$
(\tilde S_n,\tilde A_n):= (S_n+ s,A_n+a+ns)_{n\in \N}
$$
has under $\P^{(0,0)}$ the same distribution as $(S_n,A_n)$ under $\P^{(s,a)}$, and the analogous property for the process $\Gamma$.
\end{rem}

In order to prove Theorem~\ref{pinned_CLT}, we prove tightness and convergence of finite-dimensional distributions for the conditioned distributions.


\begin{proof}[Proof of tightness in Theorem \ref{pinned_CLT}] We prove that the sequence of conditional distributions $\cL(\Xi^\ssup{N}\,|\, \Xi^\ssup{N}_1= \pin^\ssup{N})$
on $C([0,1],\R^2)$ is tight.

Let $\eps>0$ and fix compact sets $K_1,K_2$ in $C([0,\frac 12],\R^2)$ with 
$$
\P(\Xi^{\ssup N}  \in K_1) \geq 1-\eps \ \text{ and } \ \P(\bar \Xi^\ssup{N}  \in K_2)\geq 1-\eps
$$
for all $N\in2\N$, where the processes are to be considered on the time interval $[0,\frac 12]$. 
Such compact sets exist by Donsker's invariance principle (see, e.g., \cite{Bil99}).
 Now  let $K\subset C[0,1]$ be the set of continuous functions $f:[0,1]\to \R^2$ with
$$
(f(t):t \in[0,\sfrac12]) \in K_1 \ \text{ and } \ (f(1-t):t\in [0,\sfrac 12])\in K_2.
$$
It is obviously compact in $C([0,1],\R^2)$. 
By the definition of the adjoint process (see~(\ref{eqn:adjointprop})), one has
$$
\{\Xi^\ssup{N}_1=\pin^\ssup{N}\} = \{\Xi^\ssup{N}_{1/2} = \bar \Xi^\ssup{N}_{1/2} \} = \{ \bar \Xi^\ssup{N}_1=0\}
$$
so that
$$
\P(\Xi^\ssup{N} \not \in K| \Xi_1^\ssup{N}=\pin^{\ssup N})\leq \P(\Xi^\ssup{N} \not \in K_1| \Xi^\ssup{N}_{1/2} = \bar \Xi^\ssup{N}_{1/2})+ \P(\bar \Xi^\ssup{N} \not \in K_2|\Xi^\ssup{N}_{1/2} = \bar \Xi^\ssup{N}_{1/2})
$$
To obtain an upper bound for the first term, we observe that $(\ind\{\Xi^\ssup{N} \not \in K_1\}, \Xi^\ssup{N}_{1/2})$ and $\bar \Xi^\ssup{N}_{1/2}$ are independent which implies that
$$
\P(\Xi^\ssup{N} \not \in K_1, \Xi_{1/2}^\ssup{N}=\bar \Xi_{1/2}^\ssup{N})  = \sum_{z} \P(\Xi^\ssup{N}\not \in K_1, \Xi^\ssup{N}_{1/2}=z) \, \P(\bar  \Xi^\ssup{N}_{1/2}=z)
$$
By the local central limit theorem, the weights $\P(\bar  \Xi^\ssup{N}_{1/2}=z)$ are uniformly bounded by a constant multiple of $N^{-2}$ so that
$$\P(\Xi^\ssup{N} \not \in K_1, \Xi_{1/2}^\ssup{N}=\bar \Xi_{1/2}^\ssup{N}) \leq   C_1 \frac 1{N^2}  \P(\Xi^{\ssup N}\not \in K_1) \leq C_1 \eps \frac 1{N^2}
$$
for a universal constant $C_1$. Analogously, one concludes that
$$\P(\bar \Xi^\ssup{N} \not \in K_2, \bar \Xi_{1/2}^\ssup{N}= \Xi_{1/2}^\ssup{N}) \leq   C_2 \frac 1{N^2}  \P(\bar \Xi^\ssup{N} \not \in K_1) \leq C_2 \eps \frac 1{N^2}
$$
for a universal contant $C_2$. Since by the local central limit theorem (Proposition~\ref{prop:llt} above) $\lim_{N\to\infty} N^2\,\P(\Xi^\ssup{N}_1=\pin^\ssup{N})=C_3>0$, we conclude that
$$
\limsup_{n\to\infty} \P(\Xi^\ssup{N} \not \in K| \Xi_1^{\ssup N}=\pin^{\ssup N}) \leq \frac {C_1+C_2}{C_3} \,\eps.
$$
Since $\eps>0$ was arbitrary, this proves tightness.
\end{proof}

\begin{proof}[Proof of convergence of finite-dimensional distributions in Theorem \ref{pinned_CLT}] It remains to prove convergence of finite dimensional marginals.   For  $t>0$ we denote by $g_t:\R^2 \times \R^2\to [0,\infty) $ the transition density of the Markov process $(\Gamma_s)$ over an interval of length~$t$. It is
$$
g_t(u,v;x,y)= t^{-2}\,g\Bigl(\frac{x-u}{\sqrt t},\frac {y-v-tu}{t^{3/2}}\Bigr).
$$
 
Fix $m\in\N$, times $0<t_1<\dots <t_m<1$ and a continuous and bounded  function $f:(\R^2)^m \to [0,\infty)$. We denote $t_m^*=t_m^*(N)= \min (\Z/N)\cap [t_m,1]$.\smallskip

First we verify that for arbitrary $\eps>0$, one has for sufficiently large $N\in2\N$ that 
\begin{align}\label{eq0102-2}
\E[\ind_{\{\Xi_1^\ssup{N} = \pin^{\ssup N}\}} | \Xi^{\ssup N}_{t_m^*}=z ] \geq (4 g_{1-t_m^*}(z,\pin)- \eps)N^{-2}
\end{align}
for all $z$ with $\P( \Xi_{t_m^*}^\ssup{N}=z)>0$. The estimate is a consequence of the local central limit theorem: we let $n=n(N)=(1-t_m^*)N$, $\ell=\ell(N)=(\sqrt N \pin^{\ssup N}_1,N^{3/2} \pin^{\ssup N}_2)$ and $\zeta=(\sqrt N z_1,N^{3/2} z_2)$, where $z$ is as before. For any $\eps'>0$, one has uniformly in the relevant $z$'s that for sufficiently large $N$ one has
\begin{align}\begin{split}\label{eq0102-1}
\P(\Xi_1^\ssup{N} = \pin^{\ssup N} | \Xi^{\ssup N}_{t_m^*}=z) &= \P^{\zeta}((S_n,A_n)=\ell) \\
&=\P((S_n,A_n)=(\ell_1-\zeta_1, \ell_2-\zeta_2-n \zeta_1))\\
&\geq 4 n^{-2}  g\Bigl(\frac {\ell_1-\zeta_1}{\sqrt n}, \frac {\ell_2-\zeta_2-n \zeta_1}{n^{3/2}}\Bigr)-\eps' n^{-2}\\
&= 4 N^{-2}  g_{1-t_m^*}\bigl(z_1,z_2;\pin_1^\ssup{N},\pin_2^\ssup{N}\bigr)-\eps' n^{-2}.
\end{split}\end{align}
Since $n(N)$ is of order $N$, we can choose for given $\eps>0$ a sufficiently small $\eps'>0$ such that~(\ref{eq0102-1}) implies ~(\ref{eq0102-2}).

Hence, by the Markov property one has
\begin{align*}
 & \E[f(\Xi_{t_1}^\ssup{N},\dots, \Xi_{t_m}^\ssup{N}) \, \ind_{\{\Xi_1 ^\ssup{N}= \pin^{\ssup N}\}}]\\
 & = \E[f(\Xi_{t_1}^\ssup{N},\dots, \Xi_{t_m}^\ssup{N})  \, \E[\ind_{\{\Xi_1 ^\ssup{N}= \pin^{\ssup N}\}}|\cF_{t_{m}^*}]]\\
 &\geq 4 N^{-2} \, \E[f(\Xi_{t_1}^\ssup{N},\dots, \Xi_{t_m}^\ssup{N}) \, g_{1-t_m^*}(\Xi_{t_m^*} ^\ssup{N},\pin) ] -\eps C\, N^{-2},
\end{align*}
where $C$ denotes a universal bound for $f$. Using the continuity of $g$ together with the classical Donsker invariance principle \cite{Bil99}, we arrive at
\begin{align*}
 \liminf_{n\to\infty} &\sfrac 14 N^2 \E[f(\Xi_{t_1}^\ssup{N},\dots, \Xi_{t_l}^\ssup{N}) \, \ind_{\{\Xi_1^\ssup{N} = \pin^{\ssup N}\}}]\\
& \ \ \geq \E[f(\Gamma_{t_1},\dots, \Gamma_{t_m}) \, g_{1-t_m}(\Gamma_{t_m},\pin) ] -\eps C.
\end{align*}
Analogously, one proves the converse bound. Since $\eps>0$ is arbitrary, we get that
\begin{align*}
\lim_{n\to\infty} \sfrac14 N^2\, &\E[f(\Xi_{t_1}^\ssup{N} ,\dots, \Xi_{t_m}^{\ssup N} ) \, \ind_{\{\Xi^\ssup{N}_1 = \pin^{\ssup N}\}}] =\E[f(\Gamma_{t_1},\dots, \Gamma_{t_m}) \, g_{1-t_m}(\Gamma_{t_m},\pin) ] 
\\
& \ \ = \int\dots \int f(z_1,\dots,z_l)\, g_{t_1}(0, z_1) \dots g_{1-t_m}(z_m,\pin) \,d z_1 \dots d z_m
\end{align*}
Conversely,
$$
\lim_{n\to\infty} \sfrac14 N^2\,  \E[\ind_{\{\Xi_1^{\ssup N} = \pin^{\ssup N}\}}] = g_1(0,\pin).
$$
\end{proof}

\section{Proof of the main theorem} \label{sec:proofmainthm}
\subsection{Proof of the upper bound}
The purpose of this section is to prove the upper bound in (\ref{eqn:mainht2}). This will follow almost directly from a local limit theorem for $(S_n,A_n)$ (Proposition~\ref{prop:llt}).

Let us recall that
$$
\Omega^+_n=\{ A_1\geq 0, \ldots, A_{n}\geq 0\} \in \sigma(X_1,\ldots, X_n)
$$
and let us define
\begin{align*}
\bar\Omega^+_n
               & =\{ \bar A^\ssup{4n}_1\geq 0, \ldots, \bar A^\ssup{4n}_{n}\geq 0\}
                \in \sigma(X_{3n+1},\ldots, X_{4n}),
\end{align*}
where the adjoint process is started at $(0,0)$. Then due to the fact that $\bar A^\ssup{4n}_k=A_{4n-k}$ on $A_{4n}=S_{4n}=0$ (see (\ref{eqn:adjointprop}) we have
\begin{align*}
& \pr{ A_1\geq 0, \ldots, A_{4n}\geq 0 , A_{4n}=S_{4n}=0 } \\
\leq~& \pr{ \Omega^+_n\cap \bar\Omega^+_n\cap\{ A_{4n}=S_{4n}=0 \} }\\
 = ~ & \pr{ A_{4n}=S_{4n}=0 \left| \Omega^+_n\cap \bar\Omega^+_n \right.} \cdot \pr{\Omega^+_n\cap\bar\Omega^+_n}.
\end{align*}
 Clearly, $\Omega^+_n$ and $\bar\Omega^+_n$ are independent. 
Further, the adjoint process started at $(0,0)$ has the same distribution as the original process, so that $\pr{\Omega^+_n}=\pr{\bar\Omega^+_n}\approx n^{-1/4}$, by Sinai's result \cite{sinai1991}.

On the other hand, the event
$\Omega^+_n\cap\bar\Omega^+_n$ only concerns the random variables $X_1,\ldots, X_n$ and $X_{3n+1},\ldots, X_{4n}$, so that
\begin{align} 
& \pr{ A_{4n}=S_{4n}=0 \left| \Omega^+_n\cap\bar\Omega^+_n\right.} \notag \\   \leq~ & \sup_{\substack{x_1,\ldots, x_n,\\ x_{3n+1},\ldots, x_{4n}\in\{-1,+1\}}} \pr{ A_{4n}=S_{4n}=0 \left|\substack{X_1=x_1,\ldots, X_n=x_n,\\ X_{3n+1}=x_{3n+1},\ldots, X_{4n}=x_{4n}} \right.} \label{eqn:flk--}
\end{align} 
Given the values for $X_1,\ldots, X_n$ and $X_{3n+1},\ldots, X_{4n}$, the variables $(S_i,A_i)$, $i\in\{ n+1,\ldots, 3n \}$, form another pair of simple random walk and its integrated counterpart, however, the pair is started at some different point: i.e.\ for the vector $(S_i,A_i)$, $i=n+1,\ldots,2n$, started at $(k,l)$ has the distribution
$$
(k+S_{i-n},k(i-n-1)+l+A_{i-n}).
$$
Thus, the quantity in (\ref{eqn:flk--}) equals
$$
 \sup_{a,s,a',s'} \P^{(s,a)}\left(  A_{2n}=a',S_{2n}=s'\right) = \sup_{a',s'} \P^{(0,0)}\left(  A_{2n}=a',S_{2n}=s'\right).
$$
Now, the local limit theorem for $(S_n,A_n)$ (Proposition~\ref{prop:llt}) tells us that the latter probability is bounded by $c n^{-2}$.

Summing up and using once again the local limit theorem, we get
\begin{align*}
& \pr{ A_1\geq 0, \ldots, A_{4n}\geq 0 \left| A_{4n}=S_{4n}=0 \right. } \\ 
=~ & \frac{\pr{ A_1\geq 0, \ldots, A_{4n}\geq 0 , A_{4n}=S_{4n}=0 }}{\pr{ A_{4n}=S_{4n}=0 }} \\
\leq~  & {\rm const.}\, \frac{n^{-2} \cdot n^{-1/4} \cdot n^{-1/4}}{n^{-2}}\\
 \approx~  & n^{-1/2},
\end{align*}
which finishes the proof of the upper bound.

\subsection{Proof of the lower bound}
Here we prove the lower bound in (\ref{eqn:mainht2}).

First observe that 
Proposition~\ref{prop:scaling} and Lemma~\ref{lem:spositivecon} show that there are constants $0<a<b<\infty$ and $\kappa>0$ such that, for all $n$,
\begin{equation} \label{eqn:nodr+}
\P\left(  S_{n} \in [a n^{1/2},b n^{1/2}], A_{n}\in [a n^{3/2},b n^{3/2}] ~ |~ \Omega_{n}^+ \right) \geq \kappa >0.
\end{equation}

Using the Markov property of $(S_n,A_n)$ we have
\begin{align}
 \P( \Omega_{4n}^+ \cap& \{  (S_{4n},A_{4n})=(0,0) \} ) \notag\\
 =\, & \sum_{k,l} \sum_{k',l'} \P^{(0,0)}( \Omega_{n}^+\cap \{  (S_{n},A_{n})=(k,l) \} ) \notag \\ 
 & ~ ~~ ~ ~~ ~~~ ~\cdot \P^{(k,l)}( \Omega_{2n}^+ \cap \{  (S_{2n},A_{n})=(k',l') \} )   \notag \\
  & ~ ~~ ~ ~~ ~~~ ~\cdot \P^{(k',l')}( \Omega_{n}^+ \cap \{  (S_{n},A_{n})=(0,0) \} ). \label{eqn:lwmainarg}
\
 \end{align}
We denote by $(\bar S_k^\ssup{n},\bar A_k^\ssup{n})_{k=0,\ldots,n}$ denote the adjoint process started in $(0,0)$ and consider 
$$
\bar\Omega_n^+=\{\bar A_k^\ssup {n}\geq0\text{ for all }k=0,\dots,n\}.
$$
By (\ref{eqn:adjointprop}), one has
\begin{align*}
 \P^{(k',l')} ( \Omega_n^+ \cap \{  (S_{n},A_{n})=(0,0)\}  ) & = \P^{(k',l')}( \bar\Omega_n^+ \cap \{ (\bar S_n^\ssup{n},\bar A_n^\ssup{n})=(k',l') \} ) \\
& = \P^{(0,0)}( \Omega_n^+ \cap \{  (S_{n},A_{n-1})=(-k',l') \} ),
\end{align*}
where we used in the last step that $((\bar S^{\ssup n}_m, \bar A^{\ssup n}_m: m=0,\dots,n)$  under $\P^{(k',l')}$ is identically distributed as  $((-S_{m},A_{m-1}):m=0,\dots,n)$ under $\P^{(0,0)}$. Here we adopt the convention $A_{-1}= A_0-S_0$.
In order to compute  a lower bound for (\ref{eqn:lwmainarg}),  we  can confine ourselves to summands where $(k,l)$ and $(-k',l')$ are in $[a {n}^{1/2}, b{n}^{1/2}]\times [a {n}^{3/2}, b {n}^{3/2}]$, respectively, that is to
$$
\cL(n)=\{ (k,l)\in D_n ~|~ (k,l)\in [a {n}^{1/2}, b{n}^{1/2}]\times [a {n}^{3/2}, b {n}^{3/2}]\}
$$
and
$$
\mathcal R(n)=\{ (k',l')\in \tilde D_n ~|~ (-k',l')\in [a {n}^{1/2}, b{n}^{1/2}]\times [a {n}^{3/2}, b {n}^{3/2}]\},
$$
respectively, where $D_n$ is defined in (\ref{Dn}) and
$$
 \tilde D_n:=\{\ell=(\ell_1,\ell_2)\in \Z^2:\ \ell_1=n\mod 2,\quad \ell_2=\tfrac{(n-1)n)}{2}\mod 2 \}.
$$
We will prove below that
\begin{equation} \label{eqn:nodr-}
2\kappa':= \liminf_{n\to\infty} n^2 \,\inftwo{(k,l) \in \mathbb \cL(n)}{(k',l')\in \mathcal R(n)} \P^{(k,l)}( \Omega_{2n}^+ \cap \{  (S_{2n},A_{2n})=(k',l') \} ) >0.
\end{equation}
Then one estimates (\ref{eqn:lwmainarg})  by the product of $\kappa' n^{-2}$,
$$
 \sum_{(k,l)\in\mathcal{L}(n)} \P( \Omega_{n}^+\cap \{  (S_{n},A_{n})=(k,l) \} ) \geq  \pr{\Omega_{n}^+} \kappa,
$$
and
$$
 \sum_{(k',l')\in\mathcal{R}(n)} \P( \Omega_{n}^+\cap \{  (S_{n},A_{n-1})=(-k',l') \} ) \geq  \pr{\Omega_{n}^+} \kappa,
$$
where we used (\ref{eqn:nodr+}) in both cases. [To see the second relation one needs the following additional argument in conjunction with (\ref{eqn:nodr+}): Let $0<\eps<b-a$. Then for all $n$ large enough
\begin{align*}
&\P\left(  S_{n} \in [a n^{1/2},b n^{1/2}], A_{\mathbf{n-1}}\in [(a-\eps) n^{3/2},b n^{3/2}] ~ |~ \Omega_{n}^+ \right)\\
& \geq \P\left(  S_{n} \in [a n^{1/2},b n^{1/2}], A_{\mathbf{n}}\in [a n^{3/2},b n^{3/2}] ~ |~ \Omega_{n}^+ \right) \geq \kappa>0;
\end{align*}
and we continue to work with $a-\eps$ instead of $a$.]

Thus, assuming (\ref{eqn:nodr-}), using the local limit theorem (Proposition~\ref{prop:llt}) for the denominator, and Sinai's result \cite{sinai1991}, we get
\begin{align*}
& \P( \Omega_{4n}^+ ~|~  (S_{4n},A_{4n})=(0,0) )  = \frac{ \P( \Omega_{4n}^+ \cap \{  (S_{4n},A_{4n})=(0,0) \} )}{\pr{(S_{4n},A_{4n})=(0,0)}} \\ 
& \geq \frac{\kappa' n^{-2}  \cdot \kappa  c\, n^{-1/4} \cdot \kappa c\, n^{-1/4}}{c\, (4n)^{-2}} \approx n^{-1/2},
\end{align*}
which shows the assertion.

It remains to prove (\ref{eqn:nodr-}). We proceed with a proof by contradiction.

Assume that the $\liminf$ in (\ref{eqn:nodr-}) is zero. Then there exist an $\N$-valued sequence $(n_m)_{m\in\N}$  that tends to infinity and pairs $(k_m,l_m) \in \cL(n_m)$ and $(k'_m,l'_m)\in\mathcal R(n_m)$ for $m\in\N$ such that
$$
\lim_{m\to \infty} n_m^2\, \P^{(k_m,l_m)}( \Omega_{2n_m}^+ \cap \{  (S_{2n_m},A_{2n_m})=(k'_m,l'_m) \} ).
$$
 Without loss of generality we can assume that the limits
\begin{align*}
\mathfrak s:=\lim_{m\to\infty} ( (2n_m)^{-1/2} {k_m}, (2n_m)^{-3/2} {l_m})\in[a,b]^2, \\ \pin:=\lim_{m\to\infty} ((2n_m)^{-1/2}  k'_m, (2n_m)^{-3/2} l'_m) \in[a,b]^2
\end{align*}
exist, since this is the case for at least one subsequence of $(n_m)$. In order to apply he central limit theorem, we work with the continuous function 
$$
F:C[0,1]^2\to [0,\infty), (z^1,z^2)\mapsto 1\wedge (\inf_{t\in[0,1]} z_t^2)^+ \leq \ind_{\{z_t^2\geq 0, t\in[0,1]\}}.
$$
Let
$$
Z_t^{m} :=(Z_t^{m,1},Z_t^{m,2}):=( (2n_m)^{-1/2}\,S_{2n_m t},  (2n_m)^{-3/2}\,A_{2n_m t} )
$$
for $t\in \N_0/(2n_m)$ and extend the definition of $Z^m$ between the points in $\N_0/(2n_m)$  linearly. 
Then
\begin{align*}
 &\P^{(k_m,l_m)}( \Omega_{2n_m}^+ \cap \{  (S_{2n_m},A_{2n_m})=(k',l') \} )\notag  \\
&\geq \E^{(k_m,l_m)}[ F(Z) ~|~  (S_{2n_m},A_{2n_m})=(k'_m,l'_m) ]  \cdot \P((S_{2n_m},A_{2n_m})=(k'_m,l'_m) )
\end{align*}
Since $\P^{(k_m,l_m)} (S_{2n_m},A_{2n_m})=(k'_m,l'_m) )\geq  c (2n_m)^{-2}$ for a positive constant $c$, a contradiction is achieved once we show that 
$$
\limsup_{m\to\infty}   \E^{(k_m,l_m)}[ F(Z) ~|~  (S_{2n_m},A_{2n_m})=(k'_m,l'_m) ]>0.
$$
However, this follows directly from  the local limit theorem (Proposition~\ref{prop:llt}) and Remark~\ref{rem1}: the limsup is actually a limit and it is equal to
$$
\E^{\mathfrak s} [F(\Gamma)| \Gamma_1=\mathfrak p].
$$ 
Further it is positive, since $\mathfrak s_2$ and $\mathfrak p_2$ are positive.


\end{document}